\documentclass[11pt,letterpaper]{amsart}
\usepackage{amsmath}
\usepackage{amssymb}
\usepackage{amsthm}
\usepackage{hyperref}
\usepackage[noabbrev,capitalize]{cleveref}
\usepackage{mathrsfs}
\usepackage{mathtools}
\usepackage{slashed}
\usepackage{tikz-cd}
\usepackage[shortlabels]{enumitem}

\setenumerate{label=(\roman*)}

\DeclareMathOperator{\Ric}{Ric}






\def\sideremark#1{\ifvmode\leavevmode\fi\vadjust{\vbox to0pt{\vss
 \hbox to 0pt{\hskip\hsize\hskip1em
 \vbox{\hsize3cm\tiny\raggedright\pretolerance10000
 \noindent #1\hfill}\hss}\vbox to8pt{\vfil}\vss}}}


\newtheorem{theorem}{Theorem}[section]
\newtheorem{proposition}[theorem]{Proposition}
\newtheorem{lemma}[theorem]{Lemma}
\newtheorem{corollary}[theorem]{Corollary}

\theoremstyle{definition}
\newtheorem{definition}[theorem]{Definition}

\theoremstyle{remark}
\newtheorem{remark}[theorem]{Remark}

\numberwithin{equation}{section}

\makeatletter
\@namedef{subjclassname@2020}{\textup{2020} Mathematics Subject Classification}
\makeatother

\begin{document}

\title[Nonexistence of the metric with positive $Cm$ and $H_m$]{Nonexistence of the metric with positive intermediate curvatures on manifolds with boundary}

\author{Jingche Chen}
\address{Department of Mathematical Sciences, Tsinghua University, 100084, Beijing, China}
\email{cjc23@mails.tsinghua.edu.cn}

\author{Han Hong}
\address{Department of Mathematics and statistics \\ Beijing Jiaotong University \\ Beijing \\ China, 100044}
\email{hanhong@bjtu.edu.cn}

\begin{abstract}
We establish curvature obstruction theorems for manifolds with boundary. Our main theorems show that, for dimensions up to 7, a topologically nontrivial compact manifold with boundary cannot have a metric of positive 
$m$-intermediate curvature if the boundary is $m$-convex, and some rigidity result holds if $m$-intermediate curvature is nonnegative. This non-existence persists after performing a connected sum with an arbitrary manifold. These results generalize results of \cite{brendlegeroch'sconjecture,chenshuli_end,ChuKwongLeerigidity,XuTrans} to manifold with boundary.
\end{abstract}

\maketitle

\section{Introduction}

The classical Geroch conjecture asks whether the torus $\mathbb{T}^{n}$ admits a Riemannian metric with positive scalar curvature. This conjecture was resolved by Schoen and Yau \cite{Schoen-Yau-PSC} for $3\leq n\leq 7$ using the minimial hypersurface method and by Gromov and Lawson \cite{Gromov-Lawson-psc} using the spin method for all dimensions. Recently, Brendle, Hirsch and Johne \cite{brendlegeroch'sconjecture} extended this result to $m$-intermediate curvature. Their approach utilizes stable $m$-weighted slicings to demonstrate that for $n\leq 7$ and $1\leq m\leq n-1$, the manifold $M^{n-m}\times \mathbb{T}^{m}$ does not admit a metric of positive $m$-intermediate curvature. Subsequently, Chu, Kwong and Lee \cite{ChuKwongLeerigidity} studied rigidity results when the $m$-intermediate curvature
is non-negative and the ambient dimension is at most $5$. Xu \cite{XuTrans} also obtained rigidity results, showing that they  hold for $n=6$ and constructing  counterexamples for $n\geq 7$. 

There is also significant interest in extending the Geroch conjecture to the non-compact settings, such as whether positive scalar curvature exists on $\mathbb{T}^{n}\# X$, where $X$ is an arbitrary manifold. By analyzing complete non-compact stable minimal hypersurfaces, Lesourd, Unger and Yau \cite{Lesourd-Unger-Yau} proved the case $n = 3$.  Chodosh and Li \cite{chodoshlisoapbubble} later applied the $\mu$-bubble technique to address this problem for $3\leq n\leq 7$. Their method also applies to $(\mathbb{S}^{1}\times M^{n-1})\# X$ for $3 \leq n \leq 7$, where $M$ is a Schoen-Yau-Schick manifold and $X$ is arbitrary. In \cite{chenshuli_end}, Chen refined their method and showed that for $3\leq n\leq 7$, $(\mathbb{T}^{m}\times M^{n-m})\# X$, where $M$ is a closed manifold and $X$ is an arbitrary manifold, does not admit a metric of positive $m$-intermediate curvature for  $m,n$ satisfying certain algebraic inequalities. For general dimensions, Wang and Zhang \cite{Wang-Zhang-geroch} showed that for any $n$ and any spin $n$-manifold $X$, the connected sum $\mathbb{T}^{n}\#X$ admits no complete metric of positive scalar curvature. 

The paper aims to explore analogous results for compact manifolds with boundary. 
Existing results for manifolds with boundary include work by Gromov and Lawson \cite{Gromov-Lawson-ann}, who showed that if $\mathbb{T}^{n}\times [0,1]$ admits a metric $g$ with positive scalar curvature, then the mean curvature $H_{g}$ on the boundary must be negative somewhere. R\"ade \cite{Rade} and J. Xu \cite{xu2025scalarmeancurvaturecomparison} have generalized Gromov-Lawson's result, proving various rigidity theorems.

Our first result extends the non-existence results by Brendle, Hirsch and Johne \cite{brendlegeroch'sconjecture} and the  rigidity results by Chu, Kwong, Lee \cite{ChuKwongLeerigidity} and Xu \cite{XuTrans}.
\begin{theorem}\label{main theorem 1 Introduction}
    Let $(M,\partial M)$ be a compact orientable $n$-dimensional manifold. Suppose that there exist nonzero $  \alpha^{1},...,\alpha^{m}\in H^{1}(M,\mathbb{Z})$ such that 
    $(\alpha^{1}\smile\cdots\smile \alpha^{m})|_{\partial M}\neq 0$. Then

    (1) If $2\leq n\leq 7$ and $1\leq m\leq n-1$, there is no Riemmanian metric $g$ on $M$ satisfying $C_{m}> 0$ and $H_{m}\geq 0$, or $C_{m}\geq 0$ and $H_{m}> 0$.  

    (2) If $2\leq n\leq 6$ and $1\leq m\leq n-1$, or $n=7$ and $m\in \{1,5,6\}$, and there exists a metric $g$ on $M$ satisfying $C_{m}\geq 0$ and $H_{m}\geq 0$, then $(M,\partial M)$ is isometrically covered by $(X^{n-m}\times \mathbb{R}^{m},\partial X^{n-m}\times \mathbb{R}^{m})$, where $X$ is a compact manifold with boundary satisfying $\text{Ric}_{X}\geq 0$ and $A^{\partial X}\geq 0$.
\end{theorem}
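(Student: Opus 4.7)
The plan is to adapt the stable $m$-weighted slicing technique of Brendle--Hirsch--Johne \cite{brendlegeroch'sconjecture} to the free-boundary setting. The cohomological hypothesis ensures that the cup product pulls back nontrivially to $\partial M$, which will drive the construction of a nested chain $M \supset \Sigma_1 \supset \cdots \supset \Sigma_m$ of free-boundary stable hypersurfaces; the bulk contribution $C_m > 0$ will combine with a boundary contribution governed by $H_m \geq 0$ to contradict the accumulated second-variation inequality.

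First I would represent the classes $\alpha^i$ by smooth maps $f_i \colon M \to S^1$, and use Poincar\'e--Lefschetz duality to regard $\alpha^1$ as a class in $H_{n-1}(M,\partial M;\mathbb{Z})$. Minimizing area in this class among integral currents with boundary in $\partial M$ yields, for $n \leq 7$, a smooth free-boundary hypersurface $\Sigma_1$ meeting $\partial M$ orthogonally. Iterating inside $\Sigma_1$, I would construct $\Sigma_{k+1} \subset \Sigma_k$ as a free-boundary stable weighted minimizer Poincar\'e--Lefschetz dual to $\alpha^{k+1}|_{\Sigma_k}$, the weight being determined from the principal eigenfunction of the previous stability operator as in \cite{brendlegeroch'sconjecture}. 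The nontriviality of $(\alpha^1 \smile \cdots \smile \alpha^m)|_{\partial M}$ inductively ensures that each boundary $\partial \Sigma_k \subset \partial M$ is homologically essential, so the construction survives $m$ steps.

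Next I would assemble the second-variation inequalities. Each $\Sigma_k$ contributes a bulk term involving the ambient curvature in the normal direction $\nu_k$, plus a boundary term of the form $-\int_{\partial \Sigma_k} A^{\partial M}(\nu_k,\nu_k)\, w_k \phi^2$ arising from integration by parts against the free-boundary condition. Summing along the slicing and invoking the Brendle--Hirsch--Johne curvature identity collapses the bulk terms to $-C_m$ tested against a product weight, while the boundary contributions combine, for a suitable test function, into the sum of $m$ principal curvatures of $\partial M$ along $\nu_1,\ldots,\nu_m$. Since this sum is bounded below by $H_m$, the resulting inequality is incompatible with $C_m > 0$ (or with $C_m \geq 0$ and $H_m > 0$), proving part (1). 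For the rigidity in (2), I would trace back the equality cases as in Chu--Kwong--Lee \cite{ChuKwongLeerigidity} and Xu \cite{XuTrans}: every step must saturate, forcing each $\Sigma_k$ to be totally geodesic with constant weight and forcing $A^{\partial M}(\nu_k,\nu_k) = 0$ along $\partial \Sigma_k$; lifting to the cover associated with the induced map $\pi_1(M) \to \mathbb{Z}^m$ then produces the isometric splitting $(M,\partial M) \cong (X^{n-m} \times \mathbb{R}^m,\, \partial X \times \mathbb{R}^m)$ with $\Ric_X \geq 0$ and $A^{\partial X} \geq 0$, the dimensional restrictions on $(n,m)$ entering exactly where the rigidity in \cite{XuTrans} does.

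The main obstacle I anticipate is the correct bookkeeping of the boundary contributions through the iterated free-boundary construction. In the closed setting of \cite{brendlegeroch'sconjecture} the second variation sees only interior curvature; here every slicing produces a boundary integral whose integrand mixes $A^{\partial M}$ with the weight functions defining the slicing. Only a careful choice of test functions and warping exponents will package these terms into precisely the $m$-convexity quantity $H_m$, and this is the point where the condition $H_m \geq 0$ (rather than merely mean-convexity) must enter the argument.
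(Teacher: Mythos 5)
Your proposal follows essentially the same route as the paper: a stable free-boundary weighted $m$-slicing built from the Lefschetz duals of the $\alpha^i$, the test function $\rho_{m-1}^{-1}$ in the final stability inequality so that the Robin boundary conditions of the eigenfunctions telescope the boundary terms into $\sum_{i=1}^m A^{\partial M}(\nu_i,\nu_i)\ge H_m$ (using that each $\nu_i$ is tangent to $\partial M$ and all conormals coincide with $\eta$), and for (2) the equality analysis plus a free-boundary foliation argument in the style of Chu--Kwong--Lee and Xu, with the dimensional restriction entering exactly where you place it. The sketch is correct and matches the paper's proof in structure and in the identification of the key difficulty.
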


The cohomology assumption $(\alpha^{1}\smile\cdots\smile \alpha^{m})|_{\partial M}\neq 0$ can imply $\alpha^{1}\smile\cdots\smile \alpha^{m}\neq 0$. If we weaken it to $\alpha^{1}\smile\cdots\smile \alpha^{m}\neq 0$, then for $2\leq n\leq 7$ and $1\leq m\leq n-1$, there is no Riemmanian metric $g$ on $M$ satisfying $C_{m}> 0$ and $H_{m}\geq 0$. In fact, under this assumption, it is not guaranteed that minimizers have boundaries. Thus only positive curvature in the interior plays role in the proof.

The part $(1)$ of Theorem \ref{main theorem 1 Introduction} recovers 
\cite[Theorem 1.5]{Chow_2023} by Chow, Johne and Wan  and \cite[Corollary 1.9]{wu2025comparisontheoremsintermediatericci} by Wu.

For the special case that $m=n-1$, we have $C_{m}=\frac{1}{2}R_{g}$ and $H_{m}=H_{g}$, yielding the following corollary.

\begin{corollary}\label{corollary of thm1 Introduction}
    Let $(M,\partial M)$ be a compact orientable $n$-dimensional manifold. Suppose that there exist nonzero $  \alpha^{1},...,\alpha^{n-1}\in H^{1}(M,\mathbb{Z})$ such that 
    $(\alpha^{1}\smile\cdots\smile \alpha^{n-1})|_{\partial M}\neq 0$. Then

    (1) If $2\leq n\leq 7$, there is no Riemmanian metric $g$ on $M$ satisfying $R_{g}> 0$ and $H_{g}\geq 0$, or $R_{g}\geq 0$ and $H_{g}> 0$.  

    (2) If $2\leq n\leq 7$ and there exists a metric $g$ on $M$ satisfying $R_{g}\geq 0$ and $H_{g}\geq 0$, then $(M,\partial M)$ is isometrically covered by $([0,1]\times \mathbb{R}^{n-1}, \{0,1\}\times \mathbb{R}^{n-1})$.
\end{corollary}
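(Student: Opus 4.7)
The plan is to specialize Theorem \ref{main theorem 1 Introduction} to the case $m = n-1$. In this regime the relevant curvatures reduce to familiar ones, namely $C_{n-1} = \tfrac{1}{2}R_g$ and $H_{n-1} = H_g$, so the sign hypotheses on $(C_m, H_m)$ translate verbatim into the hypotheses on $(R_g, H_g)$ appearing in the statement, while the cohomology assumption is already identical to that of Theorem \ref{main theorem 1 Introduction}.

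Part (1) is then immediate from Theorem \ref{main theorem 1 Introduction}(1), since $m = n-1$ trivially lies in the range $1 \leq m \leq n-1$ for every $n \geq 2$ and we are restricting to $2 \leq n \leq 7$.

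For part (2) I first check that $m = n-1$ satisfies the more restrictive dimensional conditions of Theorem \ref{main theorem 1 Introduction}(2): for $2 \leq n \leq 6$ this is covered by the main range $1 \leq m \leq n-1$, while for $n = 7$ the value $m = 6$ lies in the exceptional set $\{1, 5, 6\}$. Applying the theorem then yields an isometric Riemannian cover
\[
    (X^{n-m} \times \mathbb{R}^m,\ \partial X^{n-m} \times \mathbb{R}^m) = (X^1 \times \mathbb{R}^{n-1},\ \partial X^1 \times \mathbb{R}^{n-1}),
\]
where $X^1$ is a compact manifold with boundary satisfying $\Ric_X \geq 0$ and $A^{\partial X} \geq 0$.

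The only remaining, and essentially the only nontrivial, step is to identify $X^1$ topologically. Passing if necessary to a connected component of the cover, $X^1$ is connected, compact and orientable. The cohomology hypothesis $(\alpha^{1}\smile\cdots\smile \alpha^{n-1})|_{\partial M}\neq 0$ forces $\partial M \neq \emptyset$, hence $\partial X^1 \neq \emptyset$, so the classification of compact connected orientable $1$-manifolds with nonempty boundary shows that $X^1$ is isometric to a closed interval $[0, L]$ for some $L > 0$, and the curvature conditions on $X^1$ are automatically satisfied in dimension one. Rescaling this interval produces the stated model $([0,1] \times \mathbb{R}^{n-1}, \{0,1\} \times \mathbb{R}^{n-1})$.
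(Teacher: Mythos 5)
Your proposal is correct and coincides with the paper's own (one-line) argument: the corollary is obtained by specializing Theorem \ref{main theorem 1 Introduction} to $m=n-1$, using $C_{n-1}=\tfrac{1}{2}R_g$ and $H_{n-1}=H_g$, and noting that $m=n-1$ falls in the admissible ranges for both parts (in particular $m=6\in\{1,5,6\}$ when $n=7$). Your additional step identifying the compact one-manifold $X^1$ with a closed interval, using that the boundary cohomology hypothesis forces $\partial X^1\neq\emptyset$, is a detail the paper leaves implicit but is handled correctly.
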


For $n=3$, the topological assumption implies existence of a free boundary stable minimal surface $\Sigma$ with nontrivial free boundary and Euler characteristic $\chi(\Sigma)\leq 0$. In this case, the result follows from  Ambrozio \cite{lucasrigiditythree}. The similar nonexistence results of Ambrozio in higher-dimension are studied by Barbosa, Conrado \cite{Barbosa-Conrado-disk} and Brendle, Hung \cite{brendle2024systolicinequalitieshorowitzmyersconjecture}. Their results imply part(1) of Corollary \ref{corollary of thm1 Introduction}. Our results also relate to R\"ade's work, which shows that $M$ is isometric to $[0,1]\times X$ under some topological conditions.

The second result in the following extends Chen's non-compact results from \cite{chenshuli_end}.

\begin{theorem}\label{main theorem 2 Introduction}
    Assume either $2\leq n\leq 5$, $1\leq m\leq n-1$, or $6\leq n\leq 7$, $m\in\{1,n-2,n-1\}$. Let $(M,\partial M)$ be a compact orientable $n$-dimensional manifold. Suppose there exist nonzero $  \alpha^{1},...,\alpha^{m}\in H^{1}(M,\mathbb{Z})$ such that $\alpha^{1}\smile\cdots\smile \alpha^{m}\neq 0$. Then for any $n$-dimensional manifold $N$, the connected sum $M\# N$ does not admit a complete Riemannian metric $g$ with positive $m$-intermediate curvature and nonnegative $m$-intermediate boundary curvature.
\end{theorem}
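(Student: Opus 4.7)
The plan is to argue by contradiction, adapting the iterated weighted $\mu$-bubble scheme of Chodosh-Li \cite{chodoshlisoapbubble} and Chen \cite{chenshuli_end} to the free boundary setting. Suppose $g$ is a complete metric on $M\#N$ with $C_m(g)>0$ in the interior and $H_m(g)\geq 0$ on $\partial(M\#N)$. The cohomological hypothesis survives the connected sum, since the degree-one collapsing map $M\#N\to M$ pulls $\alpha^1,\ldots,\alpha^m$ back to classes in $H^1(M\#N,\bZ)$ whose cup product is still nonzero; dually this produces smooth maps $f_k\colon M\#N\to S^1$ whose joint level sets drive the slicing.

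I would then construct inductively, for $k=1,\ldots,m$, a noncompact stable free boundary weighted $\mu$-bubble $\Sigma_k\subset\Sigma_{k-1}$ (with $\Sigma_0=M\#N$) of dimension $n-k$, Poincar\'e dual to $f_1\smile\cdots\smile f_k$ restricted to $\Sigma_{k-1}$. To pass from $\Sigma_{k-1}$ to $\Sigma_k$, exhaust $\Sigma_{k-1}$ by large compact subregions whose topological boundary contains two components separated by a regular level set of $f_k|_{\Sigma_{k-1}}$, and solve the Chodosh-Li $\mu$-bubble variational problem with a Neumann condition along $\Sigma_{k-1}\cap\partial M$; a subsequential limit of the minimizers yields $\Sigma_k$. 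Its first variation forces $\Sigma_k$ to meet $\partial M$ orthogonally so that $H_m\geq 0$ contributes the correct sign to the boundary term, and its second variation, combined with the Brendle-Hirsch-Johne weight function in the $f_k$-direction and a logarithmic cut-off test function transverse to the BHJ cylinder, propagates the weighted positive $(m-k)$-intermediate curvature bound to $\Sigma_k$ and the nonnegative $(m-k)$-intermediate boundary curvature bound to $\partial\Sigma_k$.

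After $m$ iterations, $\Sigma_m$ is a complete noncompact $(n-m)$-dimensional manifold (possibly with boundary) whose suitably conformally rescaled metric $\hat g$ satisfies $R_{\hat g}>0$ in the interior and $H_{\hat g}\geq 0$ on any remaining free boundary. The end coming from the connected sum with $N$ propagates through the slicing to endow $\Sigma_m$ with arbitrarily long cylindrical regions, which contradicts the free boundary analogue of the Gromov bandwidth / half-space principle in \cite{chodoshlisoapbubble}: a metric with positive scalar curvature and nonnegative free boundary mean curvature cannot support an indefinitely long cylinder.

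The main obstacle is the interplay between the noncompact end created by the arbitrary summand $N$ and the free boundary condition along $\partial M$, which interact nontrivially through the prescribing function in each $\mu$-bubble action. That function must be tuned both in the cylindrical BHJ direction and in the direction transverse to $\partial M$, so that the minimizing hypersurfaces (i) remain pinned on the $M$-side rather than escaping into the $N$-end, and (ii) meet $\partial M$ transversally rather than tangentially or degenerately. The dimensional restrictions $2\leq n\leq 5$, $1\leq m\leq n-1$ and $6\leq n\leq 7$, $m\in\{1,n-2,n-1\}$ reflect precisely the algebraic inequalities from Chen's weighted BHJ induction that remain compatible with the extra boundary contribution from $H_m$; outside this range the weighted second variation fails to propagate the required positivity through all $m$ slicing steps.
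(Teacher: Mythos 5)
Your outline has the right ingredients ($\mu$-bubbles, weighted slicings, free boundary stability), but it deviates from a workable argument in three places, two of which are genuine gaps.

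First, you never pass to the infinite cyclic cover. On $M\# N$ itself the class $\alpha^1$ only gives a map to $S^1$, so there is no pair of separated regions on which a prescribing function $h$ can tend to $\pm\infty$; the Chodosh--Li variational problem cannot even be posed. The paper's proof cuts $M$ along a hypersurface $\Sigma$ dual to $\alpha^1$ (Lemmas \ref{represented} and \ref{separate}), forms the $\mathbb{Z}$-cover $\hat M$, attaches a copy of $N$ over each lift of the surgery ball to get $\hat Y=\hat M\#_{\mathbb{Z}}N$, and builds $h$ from the signed distance to a fixed lift $\Sigma_0$, extended into each $N_k'$ so that $h\to\pm\infty$ before the bubble can escape into the $N$-ends, and adjusted near $\partial\hat Y$ (Lemma \ref{acute angle}) so the sublevel sets meet the boundary at angle $\le\pi/2$. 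Without this covering-space step your ``exhaust by compact subregions separated by a level set of $f_k$'' has no content. Second, your plan iterates \emph{noncompact} free boundary $\mu$-bubbles $m$ times and seeks a contradiction from ``arbitrarily long cylindrical regions'' in $\Sigma_m$. This is backwards: the whole point of the weight $h$ is that the resulting bubble $\Lambda_1$ is \emph{compact} (it is trapped in the region $|h|<\infty$, which is compact, and can be placed inside a compact $2J$-cyclic quotient $\tilde Y$). After the single $\mu$-bubble step, the remaining $m-1$ slices are ordinary compact free boundary weighted area minimizers, exactly as in Theorem \ref{main theorem 1 Introduction}, obtained because the collapsing map $G$ shows $\Lambda_1$ still carries the nonzero cup product $\alpha^2\smile\cdots\smile\alpha^m$. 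The end created by $N$ does \emph{not} propagate into the slicing, so no bandwidth/half-space principle is available or needed; the contradiction is the direct integral inequality
\begin{equation*}
0<\int_{\Sigma_m}\rho_{m-1}^{-1}\bigl(C_m+aH_{\Lambda_1}^2-|\langle\nabla h,\nu_1\rangle|\bigr)
+\int_{\partial\Sigma_m}\rho_{m-1}^{-1}H_m\le 0,
\end{equation*}
using \eqref{equ about h} and $H_m\ge 0$.

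Finally, you flag but do not resolve the corner issue where the bubble's confining region meets $\partial M$; this requires an actual construction (the paper's Lemma \ref{acute angle} modifies the distance function near the boundary so the dihedral angle is non-obtuse, which is what keeps the minimizer smooth and genuinely free boundary). As written, your proposal would not assemble into a proof without supplying the cyclic cover, replacing the iterated noncompact bubbles by the single compact bubble plus compact slicing, and carrying out the boundary-angle correction.
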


Taking $m=n-1$, we have $C_{m}=\frac{1}{2}R_{g}$ and $H_{m}=H_{g}$, leading to the following corollary.

\begin{corollary}\label{main corollary 2 Introduction}
    For $2\leq n\leq 7$, let $(M,\partial M)$ be an $n$-dimensional compact orientable manifold as in Theorem \ref{main theorem 2 Introduction}. Then for any $n$-dimensional manifold $N$, the connected sum $M\# N$ does not admit a complete Riemannian metric $g$ with positive scalar curvature and nonnegative boundary mean curvature.
\end{corollary}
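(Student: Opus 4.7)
The plan is to deduce this corollary by a direct specialization of Theorem \ref{main theorem 2 Introduction} to the top intermediate index $m = n-1$. First I would verify that the pair $(n, n-1)$ falls within the admissible range of that theorem for every $2 \leq n \leq 7$: when $2 \leq n \leq 5$ we are in the regime $1 \leq m \leq n-1$ where all intermediate indices are allowed, and when $n \in \{6, 7\}$ the allowed set $\{1, n-2, n-1\}$ still contains $m = n-1$. Therefore, Theorem \ref{main theorem 2 Introduction} applies to the pair $(n, n-1)$ across the entire stated dimension range.

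Next I would invoke the algebraic identifications already used in the passage from Theorem \ref{main theorem 1 Introduction} to Corollary \ref{corollary of thm1 Introduction}, namely $C_{n-1} = \tfrac{1}{2} R_g$ and $H_{n-1} = H_g$. Under these identifications the curvature hypotheses of the corollary, positive scalar curvature $R_g > 0$ in the interior and nonnegative mean curvature $H_g \geq 0$ on the boundary, are exactly the conditions of positive $(n-1)$-intermediate curvature and nonnegative $(n-1)$-intermediate boundary curvature required by Theorem \ref{main theorem 2 Introduction}. The cohomological assumption on $M$ (existence of $\alpha^{1}, \dots, \alpha^{n-1} \in H^{1}(M,\mathbb{Z})$ with $\alpha^{1} \smile \cdots \smile \alpha^{n-1} \neq 0$) is carried over verbatim from that theorem. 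Applying Theorem \ref{main theorem 2 Introduction} then rules out any complete metric on $M \# N$ meeting both conditions, which is the desired conclusion.

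Since the statement is obtained by direct substitution, there is essentially no obstacle to overcome at the level of this corollary: the only check required is the one performed in the first paragraph, confirming that the admissible range of $m$ still contains $n-1$ at the upper end of the dimension range. All of the substantive content lies in Theorem \ref{main theorem 2 Introduction} itself, whose proof must handle the non-compact summand $N$ and the free boundary geometry simultaneously; at the corollary level, nothing further is required.
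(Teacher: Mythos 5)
Your proposal is correct and matches the paper's own argument: the corollary is obtained by setting $m=n-1$ in Theorem \ref{main theorem 2 Introduction}, using $C_{n-1}=\tfrac{1}{2}R_g$ and $H_{n-1}=H_g$, and noting that $m=n-1$ lies in the admissible range for every $2\leq n\leq 7$. Nothing further is needed.
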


Notice that in Theorem \ref{main theorem 2 Introduction} and Corollary \ref{main corollary 2 Introduction}, the connected sum $\#$ can denote either an interior or a boundary connected sum (see Definition \ref{interior connected sum} and Definition \ref{boundary connected sum}). Consequently, Corollary \ref{main corollary 2 Introduction} implies that for any point $p$ (whether in the interior or on the boundary), the punctured manifold $(\mathbb{T}^{n-1}\times [0,1])\setminus{p}$ does not admit a complete metric with positive scalar curvature and a mean convex boundary.

\vskip.2cm
The rest of paper is organized as following. In section \ref{section2}, we give some preliminaries about existence of weighted slicing and review $m$-intermediate curvature and $m$-intermediate boundary curvature. In section \ref{section 3} and section \ref{section 4}, we present the proofs of Theorem \ref{main theorem 1 Introduction} and Theorem \ref{main theorem 2 Introduction}, respectively.

\subsection{Acknowledgements}
 The first author would like to thank his supervisor Prof Haizhong Li for his support. The authors appreciate Dr. Shuli Chen and Prof. Jianchun Chu for reading the first draft and providing valuable comments that improve the paper. The authors would also thank Prof. Brendle and Prof. Ambrozio for explanations about their papers. The second author is supported by NSFC No. 12401058 and the Talent
Fund of Beijing Jiaotong University No. 2024XKRC008. 

\section{Preliminary}\label{section2}

Firstly, we give some topological preliminaries. 
\begin{lemma}\label{represented}
    Let $M^{n}$ be a compact connected orientable smooth manifold with boundary. If there exists $0\neq \alpha\in H_{n-1}(M,\partial M;\mathbb{Z})$, then $\alpha$ is represented by a compact embedded orientable hypersurface $\Sigma$. Furthermore, if $\partial \Sigma$ is non-empty, $\partial\Sigma$ intersects $\partial M$ transversally.
\end{lemma}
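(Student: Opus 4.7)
The plan is to convert $\alpha$ into a first cohomology class via Poincar\'e--Lefschetz duality, represent that class by a smooth map $M \to S^{1}$, and then take the preimage of a suitable regular value as our hypersurface $\Sigma$.

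First, Poincar\'e--Lefschetz duality provides an isomorphism $H_{n-1}(M,\partial M;\mathbb{Z}) \cong H^{1}(M;\mathbb{Z})$; let $\beta \in H^{1}(M;\mathbb{Z})$ denote the image of $\alpha$. Since $S^{1} = K(\mathbb{Z},1)$, there is a natural bijection $H^{1}(M;\mathbb{Z}) \cong [M,S^{1}]$, so $\beta = f^{*}\theta$ for some continuous $f: M \to S^{1}$ and a fixed generator $\theta \in H^{1}(S^{1};\mathbb{Z})$. After a small homotopy we may assume $f$ is smooth.

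Next, apply Sard's theorem simultaneously to $f$ and to the restriction $f|_{\partial M}$: both have sets of regular values of full measure in $S^{1}$, so we may choose a point $p \in S^{1}$ that is regular for both. Set $\Sigma := f^{-1}(p)$. Regularity of $p$ for $f$ makes $\Sigma$ a compact, smoothly embedded, codimension-one submanifold of $M$; regularity of $p$ for $f|_{\partial M}$ identifies $\partial \Sigma = \Sigma \cap \partial M$ with $(f|_{\partial M})^{-1}(p)$, which is a smooth submanifold of $\partial M$, and is precisely the statement that $\Sigma$ meets $\partial M$ transversely. Orientability of $\Sigma$ is automatic: its normal bundle in $M$ is canonically isomorphic to $f^{*}TS^{1}|_{\Sigma}$, hence trivial, so fixed orientations of $M$ and of $S^{1}$ together induce one on $\Sigma$.

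Finally, I must verify $[\Sigma] = \alpha$ in $H_{n-1}(M,\partial M;\mathbb{Z})$. By naturality of the cap product with the relative fundamental class $[M,\partial M] \in H_{n}(M,\partial M;\mathbb{Z})$, one has $f^{*}\theta \frown [M,\partial M] = [\Sigma]$, and this equals the Poincar\'e--Lefschetz dual of $\beta$, which by construction is $\alpha$. This homological identification is the only step that is not pure transversality; it is the standard statement that preimages of regular values of maps to $S^{1}$ realize Poincar\'e--Lefschetz duality, and can be checked directly by computing the intersection pairing of $\Sigma$ with any smooth loop in $M$ (where it reduces to the degree of $f$ on that loop). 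The rest of the argument is an application of Sard's theorem.
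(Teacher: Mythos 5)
Your proposal is correct and follows essentially the same route as the paper: Lefschetz duality to pass to $H^{1}(M;\mathbb{Z})\cong[M,S^{1}]$, a smooth representative $f\colon M\to S^{1}$, and the preimage of a regular value. The only (welcome) refinements are that you obtain transversality to $\partial M$ by choosing a value regular for both $f$ and $f|_{\partial M}$ rather than by a posteriori perturbation, and that you spell out the verification $[\Sigma]=\alpha$, which the paper leaves implicit.
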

\begin{proof}
    Notice that $S^{1}=K(\mathbb{Z}, 1)$, so we have $H^{1}(M;\mathbb{Z}) = [M, S^1]$, where $[M, S^1]$ are
    homotopy classes of maps from $M$ to $S^1$. Thus, we can choose a non-constant smooth
    map $f: M \rightarrow S^1$ representing the Lefschetz duality of $\alpha$ in $H^1(M;\mathbb{Z})$. Using Sard’s theorem, there exists $a$ as the regular value of $f$. Then $f^{-1}(a)$ is a compact embedded orientable hypersurface by the regular value theorem. By perturbing slightly, we can assume that the intersection of $\partial \Sigma$ and $\partial M$ is transversal.
\end{proof}
The next lemma describes separating hypersurface via relative homology.

\begin{lemma}\label{separate}
    Let $M^{n}$ be a compact connected orientable manifold with boundary and let $\Sigma^{n-1}\subset M$ be a compact embedded connected orientable hypersurface (possibly with boundary $\partial\Sigma\subset \partial M$).
    Then $\Sigma$ is separating if and only if $[\Sigma] = 0\in H_{n-1}(M,\partial M;\mathbb{Z})$.
\end{lemma}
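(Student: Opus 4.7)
The plan is to prove the two implications separately, with both following from standard facts about homology and intersection theory for orientable manifolds with boundary.

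For the forward direction, I would argue directly: suppose $\Sigma$ separates $M$ so that $M\setminus\Sigma = U_1\sqcup U_2$ with $U_1,U_2$ open. Taking $N := \overline{U_1}$, which is an $n$-dimensional submanifold (possibly with corners along $\partial\Sigma$, but this is harmless for singular homology), the topological boundary of $N$ inside $M$ is $\Sigma\cup(N\cap\partial M)$. Viewing $N$ as a singular $n$-chain in $M$, its boundary is $[\Sigma] + [N\cap\partial M]$ up to sign. Since $[N\cap\partial M]$ is supported in $\partial M$, it is trivial in $C_{n-1}(M)/C_{n-1}(\partial M)$, so $\partial[N]=[\Sigma]$ in relative chains and hence $[\Sigma]=0$ in $H_{n-1}(M,\partial M;\mathbb{Z})$.

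For the reverse direction I would prove the contrapositive: assume $\Sigma$ is non-separating, so $M\setminus\Sigma$ is connected, and construct a loop $\gamma$ whose algebraic intersection number with $\Sigma$ equals $\pm 1$. Pick an interior point $p\in\Sigma\setminus\partial\Sigma$ and a tubular neighborhood $U$ of $p$ disjoint from $\partial M$; since $\Sigma$ is an orientable hypersurface in an orientable manifold it is two-sided, so $U\setminus\Sigma$ has two components containing points $q_+,q_-$. Connectedness of $M\setminus\Sigma$ gives a smooth path from $q_-$ to $q_+$ inside $M\setminus\Sigma$, and concatenating with a short transverse arc through $p$ joining $q_+$ to $q_-$ in $U$ produces a smooth loop $\gamma$ meeting $\Sigma$ transversally at exactly one interior point. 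Invoking the intersection pairing
\[
H_{1}(M;\mathbb{Z})\otimes H_{n-1}(M,\partial M;\mathbb{Z})\longrightarrow \mathbb{Z},
\]
which is well defined for a compact orientable manifold with boundary via Poincar\'e--Lefschetz duality, one gets $\langle[\gamma],[\Sigma]\rangle=\pm 1$, so $[\Sigma]\neq 0$.

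The main point requiring care is the geometric setup in the non-separating case when $\partial\Sigma\neq\emptyset$: the crossing must be arranged at an interior point of $\Sigma$ and the connecting path kept away from $\partial M$, so that the intersection takes place entirely in the interior of $M$ and the pairing with a relative $(n-1)$-cycle is unambiguous. Once this is set up cleanly, the algebraic intersection number is manifestly $\pm 1$ and the conclusion is immediate. An alternative, essentially equivalent, route would use the map $f\colon M\to S^{1}$ from Lemma \ref{represented}: if $[\Sigma]=0$ then the Lefschetz-dual class in $H^{1}(M;\mathbb{Z})=[M,S^{1}]$ vanishes, $f$ lifts to $\widetilde{f}\colon M\to\mathbb{R}$, and $\Sigma=\widetilde{f}^{-1}(\widetilde{a})$ visibly separates $M$ into the sublevel and superlevel sets; I would mention this only as a sanity check and keep the intersection-number argument as the main line, since it is self-contained and does not rely on explicitly choosing the representing map.
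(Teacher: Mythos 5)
Your proposal is correct and follows essentially the same route as the paper: the separating direction is proved by exhibiting $\Sigma$ (up to pieces lying in $\partial M$) as the boundary of the closure of one complementary component, and the converse is proved contrapositively by building a loop meeting a non-separating $\Sigma$ transversally in a single interior point and pairing it with $[\Sigma]$ via Lefschetz duality. Your write-up is somewhat more careful about the two-sidedness of $\Sigma$ and about keeping the intersection away from $\partial M$, but the argument is the same.
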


\begin{proof}
    Firstly, we show $[\Sigma]=0$ implies that $\Sigma$ is separating. Suppose $\Sigma$ is non-separating, in the other word, $M\backslash \Sigma$ is connected. Thus, there exists a loop $S$ transversally intersect $\Sigma$ at exactly one point. Then the intersection number $\langle [S],[\Sigma]\rangle \neq 0$ which is a contradiction with $[\Sigma]=0$.
    
    Next, we show the reverse direction. Let $V=M\backslash \Sigma$, then $V=M_{+}\cup M_{-}$, where $M_{+},M_{-}$ are disjoint connected open sets. Next, we consider their boundary, we have $\tilde{\Sigma}_{+}\cup \tilde{\Sigma}_{-}\cup(\cup_{i=1}^{k} \Sigma_{i})=\Sigma_{+}\cup \Sigma_{-}\cup \partial M = \partial M_{+}\cup \partial M_{-}$, where $\Sigma_{\pm}$ is $\Sigma$ with orientation, $\tilde{\Sigma}_{\pm}$ are the connected components containing $\Sigma_{\pm}$, respectively and $\Sigma_{i}$ are some components of $\partial M$. Since $\tilde{\Sigma}_{+}\subset \partial M_{+}$ and $\tilde{\Sigma}_{-}\subset \partial M_{-}$, then $\partial M_{+} = \tilde{\Sigma}_{+}\cup (\cup_{j=1}^{k_{0}}\Sigma_{i_{j}})$. Thus, $[\Sigma]=0\in H_{n-1}(M,\partial M;\mathbb{Z})$.
\end{proof}

We review the the first variation and second variation of the free boundary minimizing hypersurface $\Sigma\hookrightarrow M$ with weighted area functional $ \mathcal{H}_{\rho}(\Sigma)=\int_{\Sigma}\rho\ d\mu$. Suppose $\Sigma_{s}$ is a proper variation of $\Sigma$ satisfying that $\partial \Sigma_{s}\subset \partial M$. Then
\[
\left.\frac{d}{ds}\right|_{s=0} \mathcal{H}_{\rho}(\Sigma_{s}) = \int_{\Sigma}\langle \rho H_{\Sigma}+\nabla_{M}\rho ,X^{\perp}\rangle\ d\mu+\int_{\partial \Sigma}\rho\langle X,\eta_\Sigma\rangle \ dA
\]
where $X$ is the variational vector field and $\eta_\Sigma$ is the outward conormal vector of $\partial \Sigma \subset \Sigma$.
Thus, for a critical point of $ \mathcal{H}_\rho$, we have that $H_{\Sigma}=-\langle \nabla_{M}\rho,X^{\perp}\rangle$ in $\Sigma$ and $\langle X,\eta_\Sigma \rangle = 0$ along $\partial \Sigma$. Let $X=f\nu$ where $\nu$ is the unit normal vector field of $\Sigma$ in $M$. The minimizer of $ \mathcal{H}_\rho$ is a stable free boundary minimal hypersurface. Then
\[
\begin{aligned}
\left.\frac{d^2}{ds^2}\right|_{s=0} \mathcal{H}_{\rho}(\Sigma_{s}) 
&= \int_{\Sigma}[-f\Delta_{\Sigma}f-(|A^{\Sigma}|^2+\text{Ric}_{M}(\nu,\nu))f^2-f\langle \nabla_{\Sigma}\log \rho,\nabla_{\Sigma}f\rangle\\
&+(\nabla^{2}_{M}\log \rho)(\nu,\nu)f^2]\rho \ d\mu+\int_{\partial \Sigma}f(\frac{\partial f}{\partial \eta_\Sigma}-fA^{\partial M}(\nu,\nu)) dA\geq 0.
\end{aligned}
\]
We can consider the positive first eigenfunction $f\in C^{2} (\Sigma)$ which satisfies the following equation.
\[
\left\{
\begin{array}{cc}
     -\Delta_{\Sigma}f-(|A^{\Sigma}|^2+\text{Ric}_{\log \rho}(\nu,\nu))f-\langle \nabla_{\Sigma}\log \rho,\nabla_{\Sigma}f\rangle = \lambda f, & \text{in}\ \Sigma \\
     \frac{\partial f}{\partial \eta_\Sigma} = fA^{\partial M}(\nu,\nu),& \text{on}\ \partial \Sigma 
\end{array}\right.
\]
where $\text{Ric}_{\log \rho}(\nu,\nu)=\text{Ric}_{M}(\nu,\nu)-(\nabla^{2}_{M}\log \rho)(\nu,\nu)$.
Notice that $\lambda\geq 0$.
Now we introduce the stable weighted slicing with free boundary of order $m$. In closed case, it refers to \cite{brendlegeroch'sconjecture}.
\begin{definition}
Suppose $1\leq m\leq n-1$ and let $(M,\partial M)$ be an orientable compact Riemannian manifold of dimension $n$. A stable free boundary weighted $m$-slicing consists of a collection of orientable and smooth submanifolds $\Sigma_{k}$, $0\leq k \leq m$, and a collection of positive functions $\rho_{k}\in C^{\infty}(\Sigma_{k})$ satisfying the following conditions:

(1) $\Sigma_{0}=M$ and $\rho_{0}=1$.

(2) For each $1\leq k\leq m$, $\Sigma_{k}$ is an $(n-k)$-dimensional, two-sided, embedded hypersurface in $\Sigma_{k-1}$ with boundary $\partial\Sigma_k$ intersecting $\partial\Sigma_{k-1}$ orthogonally. Moreover, $\Sigma_{k}$ is a stable critical point of the $\rho_{k-1}$ weighted area
\[
    \mathcal{H}_{\rho_{k-1}}^{n-k}(\Sigma) = \int_{\Sigma}\rho_{k-1}\ d\mu
\]
in the class of hypersurfaces $(\Sigma,\partial\Sigma)\subset (\Sigma_{k-1},\partial\Sigma_{k-1})$.

(3) For each $1\leq k\leq m$, the function $\frac{\rho_{k}}{\rho_{k-1}|_{\Sigma_{k}}}$ is a first eigenfunction of the stability operator associated with $\rho_{k-1}$ weighted area.
\end{definition}

The following lemma provides a sufficient condition for the existence of a stable weighted $k$-slicing with free boundary.

\begin{lemma}\label{existence of slicing}
    Suppose $M$ is an $n$-dimensional compact, orientable manifold with nonempty boundary and let $1\leq k\leq n-1$ where $n\leq 7$. Suppose $\alpha^{1},...,\alpha ^{k}\in H^{1}(M,\mathbb{Z})$ which are nontrivial satisfying  $\smile_{j=1}^{k}\alpha^{j}\neq 0$ and $\smile_{j=1}^{k}\alpha^{j}|_{\partial M}\neq 0$. Then there exists a stable free boundary weighted k-slicing $ \ \Sigma_{k}\subset \cdots\subset \Sigma_{1}\subset\Sigma_{0}=M$ such that for any $1\leq j\leq k$, we have $[\Sigma_{j}]=(\alpha^{j}\smile\cdots\smile \alpha^{1} )\frown [M]\in H_{n-j}(M,\partial M)$ and $[\partial \Sigma_{j}]=(-1)^{j}(\alpha^{j}\smile \cdots\smile \alpha^{1} )|_{\partial M}\frown [\partial M]\in H_{n-j-1}(\partial M)$. In particular, $\forall\ 1\leq j\leq k$, $\partial \Sigma_{j}\neq \emptyset$. 
\end{lemma}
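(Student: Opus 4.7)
The plan is to prove the lemma by induction on $j\in\{0,1,\ldots,k\}$. The base case $j=0$ is tautological: set $\Sigma_{0}=M$ and $\rho_{0}\equiv 1$. For the inductive step, assume $(\Sigma_{0},\rho_{0}),\ldots,(\Sigma_{j-1},\rho_{j-1})$ have been constructed and that the stated homology formulas hold through level $j-1$. Because restriction to $\partial M$ commutes with cup product, the hypothesis $(\alpha^{1}\smile\cdots\smile\alpha^{k})|_{\partial M}\neq 0$ forces every partial restricted cup product $(\alpha^{1}\smile\cdots\smile\alpha^{j-1})|_{\partial M}$ to be nonzero, and Poincar\'e duality on the closed orientable manifold $\partial M$ gives $[\partial\Sigma_{j-1}]\neq 0$. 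Hence $\Sigma_{j-1}$ is a compact orientable $(n-j+1)$-dimensional manifold with nonempty boundary and $\dim\Sigma_{j-1}\leq 7$.

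To construct $\Sigma_{j}$, first restrict $\alpha^{j}$ to $\Sigma_{j-1}$ and apply \Cref{represented} to obtain an initial smooth two-sided embedded hypersurface $\widetilde{\Sigma}\subset\Sigma_{j-1}$ representing the class $\alpha^{j}|_{\Sigma_{j-1}}\frown[\Sigma_{j-1}]\in H_{n-j}(\Sigma_{j-1},\partial\Sigma_{j-1})$, with $\partial\widetilde{\Sigma}$ meeting $\partial\Sigma_{j-1}$ transversally. By naturality of the cap product under the inclusion $\iota\colon\Sigma_{j-1}\hookrightarrow M$ and the inductive hypothesis,
\[
\iota_{\ast}[\widetilde{\Sigma}]
=\alpha^{j}\frown\iota_{\ast}[\Sigma_{j-1}]
=\alpha^{j}\frown\bigl((\alpha^{j-1}\smile\cdots\smile\alpha^{1})\frown[M]\bigr)
=(\alpha^{j}\smile\cdots\smile\alpha^{1})\frown[M],
\]
which is nonzero by hypothesis and Poincar\'e--Lefschetz duality on $(M,\partial M)$; in particular $[\widetilde{\Sigma}]\neq 0$ and by \Cref{separate} $\widetilde{\Sigma}$ is non-separating. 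Next, minimize $\mathcal{H}_{\rho_{j-1}}$ among integral currents in the free-boundary homology class of $\widetilde{\Sigma}$. The direct method yields an area-minimizing current; since $\dim\Sigma_{j-1}\leq 7$, Federer's dimension reduction rules out interior singularities, while free-boundary regularity of Gr\"uter--Jost type gives smoothness up to $\partial\Sigma_{j-1}$ together with orthogonal contact. This produces a smooth, two-sided, embedded, stable hypersurface $\Sigma_{j}\subset\Sigma_{j-1}$ in the prescribed class. Set $\rho_{j}=\rho_{j-1}|_{\Sigma_{j}}\cdot f$ where $f>0$ is a first eigenfunction of the stability operator associated to $\mathcal{H}_{\rho_{j-1}}$ with the Robin-type boundary condition written above the definition; existence and positivity of $f$ follow from self-adjoint elliptic theory on a compact manifold with boundary.

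The boundary formula is obtained by applying the connecting homomorphism $\partial\colon H_{n-j}(M,\partial M)\to H_{n-j-1}(\partial M)$, which on cap products satisfies $\partial(\omega\frown[M])=(-1)^{|\omega|}\,\omega|_{\partial M}\frown[\partial M]$; taking $\omega=\alpha^{j}\smile\cdots\smile\alpha^{1}$ yields exactly the sign $(-1)^{j}$ asserted in the lemma. Nonemptiness of $\partial\Sigma_{j}$ then follows from $(\alpha^{1}\smile\cdots\smile\alpha^{j})|_{\partial M}\neq 0$ combined with Poincar\'e duality on $\partial M$, exactly as in the inductive setup.

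The main obstacle is the construction step: ensuring existence of a smooth free-boundary $\rho_{j-1}$-weighted minimizer in a prescribed nontrivial relative homology class, together with the orthogonal contact with $\partial\Sigma_{j-1}$. This is exactly where the dimension bound $n\leq 7$ enters, through interior and free-boundary regularity for area-minimizing currents; the weight $\rho_{j-1}$ is smooth and positive on the compact manifold $\Sigma_{j-1}$, so the weighted problem reduces to the unweighted one up to lower-order perturbations that do not affect the regularity theory.
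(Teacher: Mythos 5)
Your proof is correct and follows essentially the same route as the paper: an inductive construction in which Lefschetz duality identifies a nonzero class in $H_{n-j}(\Sigma_{j-1},\partial\Sigma_{j-1})$ by pushing forward to $H_{n-j}(M,\partial M)$, geometric measure theory produces a smooth stable free-boundary weighted minimizer in that class for $n\leq 7$, the first eigenfunction supplies $\rho_j$, and the connecting homomorphism gives the boundary formula and hence $\partial\Sigma_j\neq\emptyset$. The only cosmetic differences are that the paper realizes the class by intersecting $\Sigma_{j-1}$ with a hypersurface $H_{j+1}\subset M$ dual to $\alpha^{j+1}$ rather than by capping with the restricted class via naturality, and it treats the weight exactly through the conformal change $\rho_{j-1}^{2/(n-j)}g$ rather than your looser ``lower-order perturbation'' remark.
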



\begin{proof}
    The first step is to construct $\Sigma_{1}\subset M$. Since $\alpha^{1}\neq 0$, then $\alpha^{1}\frown [M]\neq 0$ in $H_{n-1}(M,\partial M)$. Using the geometry measure theory, there exists $\Sigma_{1}\subset M$ which is a smooth free boundary minimizing hypersurface representing $\alpha^{1}\frown [M]$. Moreover, $[\partial \Sigma_{1}] =\partial (\alpha^{1}\frown [M]) = (\alpha^{1}|_{\partial M})\frown [\partial M]\neq 0\in H_{n-1}(\partial M) $.

    Next, we construct the k-slicing. Suppose we have constructed $\Sigma_{j}\subset \cdots\subset \Sigma_{1}\subset M$, we aim to construct $\Sigma_{j+1}\subset \Sigma_j$. Using similar argument in the construction of $\Sigma_{1}$, we obtain $H_{j+1}$ which is a hypersurface in $M$ representing $\alpha^{j+1}\frown [M]$. We use $P$ to denote the Lefschetz dual map. Then 
    \[
    \begin{aligned}
    &[H_{j+1}\cap \Sigma_{j}] = P(P^{-1}([H_{j+1}])\smile P^{-1}([\Sigma_{j}]))\\
    &= (\alpha^{j+1}\smile\cdots\smile\alpha^{1})\frown [M]\neq 0\in H_{n-j-1}(M,\partial M).
    \end{aligned}
    \]
    Then
    \[
    [H_{j+1}\cap \Sigma_{j}]\neq 0 \in H_{n-j-1}(\Sigma_{j},\partial \Sigma_{j}).
    \]
    Thus, there exists $\Sigma_{j+1}\subset (\Sigma_{j},\rho_{j}^{\frac{2}{n-j-1}}g)$ which is a free boundary minimizing hypersurface representing $[H_{j+1}\cap \Sigma_{j}]\in H_{n-j-1}(\Sigma_{j},\partial \Sigma_{j})$. In the other word, $\Sigma_{j+1}$ is a free boundary weighted minimizing hypersurface w.r.t. $\int_{\Sigma}\rho_{j}d\mu$. Moreover,
    \[
    [\Sigma_{j+1}]= [H_{j+1}\cap \Sigma_{j}]\in H_{n-j-1}(M,\partial M).
    \]
    and
    \[
    \begin{aligned}
    &[\partial \Sigma_{j+1}]= \partial [H_{j+1}\cap \Sigma_{j}]= \partial ((\alpha^{j+1}\smile\cdots\smile\alpha^{1})\frown [M])\\
    &=(-1)^{j+2}(\alpha^{j+1}\smile\cdots\smile\alpha^{1})|_{\partial M}\frown [\partial M])\in H_{n-j-2}(\partial M).
    \end{aligned}
    \]
\end{proof}
\begin{remark}\label{without boundary condition}
    If we do not need $\partial \Sigma_{j}\neq \emptyset$, the condition $\smile_{j=1}^{k}\alpha^{j}|_{\partial M}\neq 0$ can be discarded.
\end{remark}
\begin{remark}
    Since the boundary map is well defined, $\smile_{j=1}^{k}\alpha^{j}|_{\partial M}\neq 0$ can imply $\smile_{j=1}^{k}\alpha^{j}\neq 0$.
\end{remark}
\begin{remark}
    We give an example that satisfies the condition of Lemma \ref{existence of slicing}. Consider $[0,1]\times \mathbb{T}^{n}$ with the standard product metric and the coordinates $(x,\theta_{1},...,\theta_{n})$. Then there exist one-forms $d \theta_{i}$ satisfying $d\theta_{1}\wedge\cdots\wedge d\theta_{n} \neq 0$ and $d\theta_{1}\wedge\cdots\wedge d\theta_{n}|_{\{0,1\}\times \mathbb{T}^{n}}\neq 0$. The corresponding slicing with free boundary is $\{\theta_{1}=\theta_{1}(\alpha_{1}),...,\theta_{n}=\theta_{n}(\alpha_{n})\}\subset\{\theta_{2}=\theta_{2}(\alpha_{1}),...,\theta_{n}=\theta_{n}(\alpha_{n})\}\subset\cdots\subset \{\theta_{n}=\theta_{n}(\alpha_{n})\}\subset [0,1]\times\mathbb{T}^{n}$. Here $(\theta_{1}(\alpha_{1}),...,\theta_{n}(\alpha_{n}))$ is a fixed point belonging to $\mathbb{T}^{n}$.
\end{remark} 

In the proof of the main theorem, we shall need a well-known fact about the free boundary hypersurface that we state here.
\begin{lemma}\label{boundary}
    Let $(\Sigma,\partial \Sigma)\hookrightarrow (M,\partial M)$ be a free boundary hypersurface. Then
    
    (1) $\eta=\eta_{\Sigma}$.
    
    (2) $A^{\partial M}(X,X)=A^{\partial \Sigma}(X,X)$, $\forall\ X\subset T_{p}(\partial \Sigma)$.  
    
\noindent Here $\eta$ is the outward unit normal vector of $\partial M\hookrightarrow M$.
\end{lemma}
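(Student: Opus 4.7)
The plan is to analyze the tangent-space decomposition at a point $p\in\partial\Sigma$ and then invoke the Gauss splitting of the ambient Levi-Civita connection. Write $\nu$ for the unit normal of $\Sigma$ in $M$. Since $\eta_\Sigma\in T_p\Sigma$ is by definition the outward unit vector perpendicular to $T_p\partial\Sigma$ inside $T_p\Sigma$, and $\nu\perp T_p\Sigma$, the ambient tangent space splits orthogonally as
\begin{equation*}
T_p M = T_p\partial\Sigma \oplus \vspan(\eta_\Sigma) \oplus \vspan(\nu).
\end{equation*}
The free boundary condition $\Sigma\perp\partial M$ along $\partial\Sigma$ says that $\nu\in T_p\partial M$; combined with $T_p\partial\Sigma\subset T_p\partial M$ and a dimension count this forces
\begin{equation*}
T_p\partial M = T_p\partial\Sigma \oplus \vspan(\nu),
\end{equation*}
so the unit normal $\eta$ of $\partial M$ in $M$ lies in $\vspan(\eta_\Sigma)$.

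To finish part~(1), I would match the orientations. In local orthonormal coordinates $(y,s,t)$ centered at $p$ with $y$ on $\partial\Sigma$, $\partial_s=\eta_\Sigma$, and $\partial_t=\nu$, the hypersurface $\Sigma$ lies in $\{t=0,\ s\le 0\}$ by the definition of the outward conormal, while $\partial M$ lies in $\{s=0\}$ to first order. Since $\Sigma\subset M$, the manifold $M$ must contain the half-space $\{s\le 0\}$ locally, so its outward normal is $+\eta_\Sigma$; that is, $\eta=\eta_\Sigma$.

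For part~(2), I would apply the standard Gauss decomposition
\begin{equation*}
\nabla_X^M Y = \nabla_X^{\Sigma} Y + A^{\Sigma}(X,Y)\,\nu
\end{equation*}
with $Y=\eta_\Sigma$ and $X\in T_p\partial\Sigma\subset T_p\Sigma$. Taking the inner product with $X$ kills the normal term, because $\langle\nu,X\rangle=0$, and one obtains $\langle\nabla_X^M\eta_\Sigma,X\rangle = \langle\nabla_X^{\Sigma}\eta_\Sigma,X\rangle$. Part~(1) says $\eta=\eta_\Sigma$ along $\partial\Sigma$, and since $X$ is tangent to $\partial\Sigma$ the ambient covariant derivative only sees the restriction of the vector field to $\partial\Sigma$, whence $\nabla_X^M\eta=\nabla_X^M\eta_\Sigma$ at $p$. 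Comparing with the definitions of $A^{\partial M}$ and $A^{\partial\Sigma}$ (taken with respect to $\eta$ and $\eta_\Sigma$ respectively) yields the desired equality.

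The only step requiring genuine care is the orientation matching in~(1); once that is in place, (2) follows in one line from the Gauss equation, because the potentially dangerous term $A^{\Sigma}(X,\eta_\Sigma)\,\nu$ is automatically killed by the tangency $X\in T\partial\Sigma\subset T\Sigma$.
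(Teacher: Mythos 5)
Your proposal is correct and follows essentially the same route as the paper: part (1) comes from the orthogonality of the free boundary intersection, and part (2) from the Gauss splitting of $\nabla^M$, with the normal term killed by $\langle X,\nu\rangle=0$. The paper simply declares (1) trivial and compresses (2) into the single chain $A^{\partial\Sigma}(X,X)=\langle\nabla^\Sigma_X\eta_\Sigma,X\rangle=\langle\nabla^M_X\eta,X\rangle=A^{\partial M}(X,X)$, so your write-up is a fleshed-out version of the same argument.
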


\begin{proof}
    (1) This is trivial since $(\Sigma,\partial \Sigma)\hookrightarrow (M,\partial M)$ is a free boundary hypersurface.

    (2) $\forall\ X\in T_{p}(\partial \Sigma)$, we have
    \[
    \begin{aligned}
        A^{\partial \Sigma}(X,X)
        =\langle \nabla_{X}^{\Sigma}\eta_\Sigma,X\rangle
        =\langle \nabla_{X}^{M}\eta,X\rangle
        =A^{\partial M}(X,X).
    \end{aligned}
    \]
    The second equality is due to $\langle X,\nu\rangle =0$.
\end{proof}
Thus, for a stable weighted $k$-slicing with boundary, we simply denote $\eta_{\Sigma_{j}}=\eta$ for $ 1\leq j\leq k.$

We recall \textit{$m$-intermediate curvature} of $M$ in \cite{brendlegeroch'sconjecture}. We also recall the $m$-convexity of $\partial M$.
\begin{definition}

    Suppose $(M,g)$ is an $n$-dimensional manifold. For given orthonormal vectors $\{e_{1},...,e_{m}\}$ in $T_pM$ at the point $p\in M$, one can extend them to an orthonormal basis $\{e_{1},...,e_{n}\}$ of $T_{p}M$. 
    The $m$-intermediate curvature $C_{m}$ of the orthornormal vectors $\{e_{1},...,e_{m}\}$ is defined by
    \[
    C_{m}(e_{1},...,e_{m}) = \sum_{p=1}^{m}\sum_{q=p+1}^{n}Rm(e_{p},e_{q},e_{p},e_{q}).\]
In particular, $C_1(e_1)=\Ric(e_1,e_1)$ and $C_{n}(e_1,\cdots,e_{n})=2R.$ Let 
\[C_m(p):=\min\{C_m(e_1,\cdots,e_m)| \{e_1,\cdots,e_m\}\ \text{are orthonormal in}\ T_pM\}.\]
We say $(M,g)$ has nonnegative $m$-intermediate curvature if $C_m(p)\geq 0$ for all $p\in M.$
\end{definition}

\begin{definition}
Suppose $(M,\partial M,g)$ is $n$-dimensional manifold with boundary. For given orthonormal vectors $\{e_1,\cdots,e_m\}$ in $T_p\partial M$ at the point $p\in\partial M$, one can extend them to an orthonormal basis $\{e_1,\cdots,e_{n-1}\}$ of $T_p\partial M$. 
The $m$-intermediate boundary curvature $H_{m}$ of the orthornormal vectors $\{e_{1},...,e_{m}\}$ is defined by
\[ H_{m}(e_{1},...,e_{m}) = \sum_{i=1}^{m}A^{\partial M}(e_{i},e_{i}).
    \]
    In particular, $H_{n-1}(e_1,\cdots, e_{n-1})=H_{\partial M}$ at $p\in \partial M$. Let 
    \[H_m(p)=\min\{H_m(e_1,\cdots,e_m)| \{e_1,\cdots,e_m\}\ \text{are orthonormal in}\ T_p\partial M\}.\]
    We say $(M,g)$ has (strictly) $m$-convex boundary if $H_m(p)\geq 0$ $(>0)$ for all $p\in M.$
\end{definition}

\section{Proof of Theorem \ref{main theorem 1 Introduction}}\label{section 3}

This section is devoted to the proof of the first main result.

\begin{proof}[The proof of (1) in Theorem \ref{main theorem 1 Introduction}]
    According to Lemma \ref{existence of slicing}, there exists a stable weighted slicing with boundary of order $m$
    \[
    \Sigma_{m}\subset\cdots\subset \Sigma_{1}\subset\Sigma_{0}=M.
    \]
Denote the unit normal vector of $\Sigma_j\subset \Sigma_{j-1}$ by $\nu_{j}$.     For any $f\in C^{2}(\Sigma_{m})$, we have
    \[
    \begin{aligned}
        &\int_{\Sigma_{m}}\Big[-f\Delta_{\Sigma_{m}}f-(|A^{\Sigma_{m}}|^2+\text{Ric}_{\Sigma_{m-1}}(\nu_{m},\nu_{m}))f^2+(\nabla_{\Sigma_{m-1}}^{2}\log \rho_{m-1})(\nu_{m},\nu_{m})f^2\\
        &-f\langle \nabla_{\Sigma_{m}}\log \rho_{m-1},\nabla_{\Sigma_{m}}f\rangle\Big]\rho_{m-1} \ d\mu
        +\int_{\partial \Sigma_{m}}f(\frac{\partial f}{\partial \eta}-fA^{\partial \Sigma_{m-1}}(\nu_{m},\nu_{m})) dA\geq 0.
    \end{aligned}
    \]
    Taking $f=\rho_{m-1}^{-1}$. Then
    \begin{equation}\label{bottom term}
    \begin{aligned}
        &\int_{\Sigma_{m}}\rho_{m-1}^{-1}\big(\Delta_{\Sigma_{m}}\log \rho_{m-1} +(\nabla_{\Sigma_{m-1}}^{2}\log \rho_{m-1})(\nu_{m},\nu_{m})-|A^{\Sigma_{m}}|^2\\
        &-\text{Ric}_{\Sigma_{m-1}}(\nu_{m},\nu_{m})\big
        )\ d\mu -\int_{\partial \Sigma_{m}}\rho_{m-1}^{-1}(\frac{\partial \log \rho_{m-1}}{\partial \eta}+A^{\partial \Sigma_{m-1}}(\nu_{m},\nu_{m}))dA\geq 0.
    \end{aligned}
    \end{equation}
    The integration over $\Sigma_{m}$ has been
    estimated in \cite[Lemma 3.1]{brendlegeroch'sconjecture}. When $n\leq 7$, we have
    \[
    \begin{aligned}
        &\int_{\Sigma_{m}}\rho_{m-1}^{-1}\big(\Delta_{\Sigma_{m}}\log \rho_{m-1} +(\nabla_{\Sigma_{m-1}}^{2}\log \rho_{m-1})(\nu_{m},\nu_{m})-|A^{\Sigma_{m}}|^2\\
        &-\text{Ric}_{\Sigma_{m-1}}(\nu_{m},\nu_{m})\big
        )\ d\mu \leq \int_{\Sigma_{m}}-\rho_{m-1}^{-1}C_{m}(\nu_{1},...,\nu_{m})\ d\mu.
    \end{aligned}
    \]
    Now we estimate the integration over $\partial \Sigma_{m}$. Using the fact that $\frac{\rho_{j+1}}{\rho_{j}}$ is the first eigenfunction on $\Sigma_j$  and Lemma \ref{boundary}, we have
    \begin{equation}\label{eq1}
    \begin{aligned}
        \frac{\partial \log \rho_{m-1}}{\partial \eta}+A^{\partial \Sigma_{m-1}}(\nu_{m},\nu_{m})
        &=
        \frac{\partial \log \rho_{0}}{\partial \eta}+\sum_{i=0}^{m-1}A^{\partial \Sigma_{i}}(\nu_{i+1},\nu_{i+1})\\
        &=\sum_{i=1}^{m}A^{\partial M}(\nu_{i},\nu_{i})\\
        &=H_{m}(\nu_{1},...,\nu_{m}).
    \end{aligned}
    \end{equation}
    Thus, we have
    \begin{equation}\label{inequa1}
    \int_{\Sigma_{m}}\rho_{m-1}^{-1}C_{m}(\nu_{1},...,\nu_{m})\ d\mu + \int_{\partial \Sigma_{m}}\rho_{m-1}^{-1}H_{m}(\nu_{1},...,\nu_{m})\leq 0.
    \end{equation}
    Since $\rho_{m-1}>0$, we finish the proof of (1).
\end{proof}
In what follows we prepare to prove the second part of Theorem \ref{main theorem 1 Introduction}. If the inequality in the above proof holds, we have the following proposition.
\begin{proposition}\label{rigidity}
    Under the assumption of (2) of Theorem \ref{main theorem 1 Introduction}, the following is true:

    $(1)$ $\lambda_{k}=0$ for all $ 1\leq k\leq m-1$;

    $(2)$ $\nabla_{\Sigma_{m-1}}\log\rho_{m-1}=0$ along $\Sigma_{m}$;

    $(3)$ $\text{Ric}_{\Sigma_{m-1}}(\nu_{m},\nu_{m})=(\nabla^{2}_{\Sigma_{m-1}}\log \rho_{m-1})(\nu_{m},\nu_{m})$ on $\Sigma_{m}$;

    $(4)$ $C_{m}(\nu_{1},...,\nu_{m})=0$;

    $(5)$ $R_{\Sigma_{m}}=R_{M}|_{\Sigma_{m}}$;

    $(6)$ $A^{\Sigma_{k}}=0$ for all $\ 1\leq k\leq m$ on $\Sigma_{m}$;

    $(7)$ $\langle \nu_{k},\eta\rangle =0$ for all $\ 1\leq k\leq m$ along $\partial \Sigma_m$;
    
    $(8)$ $A^{\partial \Sigma_{m-1}}(\nu_{m},\nu_{m})=0$;

    $(9)$ $H_{m}(\nu_{1},...,\nu_{m})=0$ in $\partial\Sigma_{m-1}\subset \Sigma_{m-1}$.
\end{proposition}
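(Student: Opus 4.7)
The plan is to trace the chain of inequalities established in the proof of part (1) of Theorem \ref{main theorem 1 Introduction} and identify the conditions under which each becomes an equality. Under the hypothesis $C_m \geq 0$ and $H_m \geq 0$, inequality \eqref{inequa1} together with the pointwise nonnegativity of the integrands forces each integral to vanish, and in fact each integrand to vanish identically. Since $\rho_{m-1} > 0$, this gives $C_m(\nu_1,\ldots,\nu_m) = 0$ on $\Sigma_m$ and $H_m(\nu_1,\ldots,\nu_m) = 0$ on $\partial\Sigma_m$, which are (4) and (9).

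For items (1), (2), (3), (5), (6), the approach is to track the equality case of the bulk pointwise estimate \cite[Lemma 3.1]{brendlegeroch'sconjecture}. That estimate is proved by recursively substituting the eigenvalue equations for the test functions $\rho_k/\rho_{k-1}$ at each slice level and applying algebraic bounds (of Cauchy--Schwarz and AM--GM type) involving second fundamental forms, gradients of log-weights, and Ricci curvatures. Each intermediate inequality contributes a rigidity statement in its equality case: the vanishing of the gradient terms $|\nabla_{\Sigma_k} \log(\rho_k/\rho_{k-1})|^2$ gives (2) after summation up the tower; the nonpositive $\lambda_k$ contributions at each intermediate level must drop out, forcing (1); the full second fundamental form terms $|A^{\Sigma_k}|^2$ vanish, yielding (6); and the eigenvalue equation at the bottom level (with $f=\rho_{m-1}^{-1}$) degenerates into the curvature identity (3). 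Item (5) then follows from (6) via the iterated Gauss equation, since each $\Sigma_k$ is totally geodesic in $\Sigma_{k-1}$.

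Items (7) and (8) are handled separately. Item (7) does not actually require rigidity and is immediate from the free boundary condition built into the definition of the weighted slicing: since $\Sigma_k \subset \Sigma_{k-1}$ meets $\partial\Sigma_{k-1}$ orthogonally, the normal $\nu_k$ is tangent to $\partial\Sigma_{k-1}$ along $\partial\Sigma_k$, and the nested inclusions $\partial\Sigma_m \subset \partial\Sigma_{k-1} \subset \partial M$ together with Lemma \ref{boundary}(1) then give $\langle \nu_k, \eta\rangle = 0$ along $\partial\Sigma_m$. Item (8) then follows by combining (2) with identity \eqref{eq1}: since $\nabla_{\Sigma_{m-1}} \log\rho_{m-1} = 0$ along $\Sigma_m$ by (2), and $\eta$ is tangent to $\Sigma_{m-1}$ along $\partial\Sigma_m$ by (7), the normal derivative $\partial\log\rho_{m-1}/\partial\eta$ vanishes there, so \eqref{eq1} combined with (9) forces $A^{\partial\Sigma_{m-1}}(\nu_m,\nu_m) = 0$. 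The main obstacle is the careful unpacking of the equality case of the Brendle--Hirsch--Johne pointwise estimate, which is not monolithic but combines several algebraic manipulations whose individual sharpness must each be tracked to extract (1), (2), (3), and (6) cleanly.
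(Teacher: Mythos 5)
Your proposal is correct and follows essentially the same route as the paper: items (1)--(6) come from tracking the equality case of the Brendle--Hirsch--Johne pointwise estimate (the paper simply cites \cite[Proposition 2.1]{ChuKwongLeerigidity} for this), (4) and (9) from the forced vanishing of both nonnegative integrals in \eqref{inequa1}, (7) from the free boundary condition, and (8) from combining (2), (9) and identity \eqref{eq1}. The only slight imprecision is your claim that (5) follows from (6) alone via the Gauss equation --- one also needs the vanishing of the normal Ricci terms $\Ric_{\Sigma_{k-1}}(\nu_k,\nu_k)$ at each level, which is likewise part of the equality case you are already tracking.
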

\begin{proof}
     The proof of $(1)-(6)$ is given in \cite[Proposition 2.1]{ChuKwongLeerigidity}. Now we check $(7)-(9)$. $(7)$ is due to the fact that $\Sigma_{m}$ has free boundary.
   $ (9)$ follows from the inequality \eqref{inequa1}. By $(2)$, $\rho_{m-1}$ is constant along $\Sigma_{m}$. Then using equation \eqref{eq1}, we obtain $(8)$. 
\end{proof}
We use the inverse function theorem to construct a local free boundary foliation. It can be viewed as free boundary case of Lemma 3.1 in \cite{ChuKwongLeerigidity}. Readers are also recommended to see \cite{lucasrigiditythree} for a similar result in three dimension.
\begin{lemma}\label{foliation}
    If $\Sigma_{m}\subset \Sigma_{m-1}$ satisfies assumptions in Proposition \ref{rigidity}, then we can find a local foliation $\{\Sigma_{m,t}\}_{-\epsilon<t<\epsilon}$ of $\Sigma_{m}$ in $\Sigma_{m-1}$ such that $\Sigma_{m,t}$ is given by the graph over $\Sigma_m$ with graph function $u_t$ along the unit normal $\nu_m$. In other words, we can define a map $f_{t}:\Sigma_{m}\rightarrow \Sigma_{m-1}$ by $f_{t}(x)=\exp_{x}(u_{t}\nu_{m})$ and $\Sigma_{m,t} = f_{t}(\Sigma_{m})$. Moreover, it satisfies $$\Sigma_{m,0}=\Sigma_{m}, \left.\frac{\partial}{\partial t}\right|_{t=0}u_{t}=1, \frac{\partial u_{t}}{\partial t}>0, \int_{\Sigma_{m}}(u_{t}-t)=0, \langle \nu_{m,t},\eta_{t}\rangle=0$$ and $H_{\Sigma_{m,t}}+\langle \nabla_{\Sigma_{m-1}}\log \rho_{m-1},\nu_{m,t}\rangle = \text{const}$ on $\Sigma_{m,t}$ where $\nu_{m,t}$ is the unit normal vector field of $\Sigma_{m,t}$ and $\eta
    _{t}$ is the unit normal vector field of $\partial \Sigma_{m-1}\hookrightarrow \Sigma_{m-1}$ restricted to $\partial \Sigma_{m,t}$.
\end{lemma}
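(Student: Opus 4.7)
The plan is to apply the implicit function theorem in Hölder spaces. First I would choose an extension $X$ of $\nu_m$ to a neighborhood of $\Sigma_m$ in $\Sigma_{m-1}$ which is tangent to $\partial \Sigma_{m-1}$ near $\partial \Sigma_m$; this is possible because the free boundary condition forces $\nu_m$ to be tangent to $\partial \Sigma_{m-1}$ along $\partial \Sigma_m$. Denoting the time-$u(x)$ flow of $X$ by $\phi_u$, the graph $\Sigma_u := \phi_u(\Sigma_m)$ automatically satisfies $\partial \Sigma_u \subset \partial \Sigma_{m-1}$. I then define
\[
F \colon C^{2,\alpha}(\Sigma_m) \times (-\epsilon_0, \epsilon_0) \longrightarrow C^{0,\alpha}_{(0)}(\Sigma_m) \times C^{1,\alpha}(\partial \Sigma_m) \times \mathbb{R},
\]
where $C^{0,\alpha}_{(0)}$ denotes the subspace of mean-zero functions, by $F = (F_1, F_2, F_3)$ with
\begin{align*}
F_1(u) &= \bigl(H_{\Sigma_u} + \langle \nabla_{\Sigma_{m-1}} \log \rho_{m-1}, \nu_u \rangle\bigr) - \text{(its mean)}, \\
F_2(u) &= \langle \nu_u, \eta_u \rangle\bigr|_{\partial \Sigma_m}, \qquad F_3(u,t) = |\Sigma_m|^{-1}\int_{\Sigma_m}(u - t)\, d\mu.
\end{align*}
Then $F(0,0) = 0$, and a zero of $F(\cdot, t)$ yields a $u_t$ realizing every property listed in the lemma.

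The key technical step is showing that $D_u F|_{(0,0)}$ is an isomorphism. Using the standard first variation formulas, this linearization sends $v$ to
\[
\bigl(-L^{w} v + \overline{L^{w} v},\; \partial_\eta v - A^{\partial \Sigma_{m-1}}(\nu_m,\nu_m)\, v,\; |\Sigma_m|^{-1}\int v\, d\mu\bigr),
\]
where $L^{w}$ is the Jacobi operator for the weighted area functional $\mathcal{H}_{\rho_{m-1}}$. The conclusions of Proposition \ref{rigidity} kill every obstructive term: $|A^{\Sigma_m}|^2 = 0$, $\Ric_{\Sigma_{m-1}}(\nu_m,\nu_m) = (\nabla^2_{\Sigma_{m-1}} \log \rho_{m-1})(\nu_m,\nu_m)$, and $\nabla_{\Sigma_{m-1}} \log \rho_{m-1} = 0$ on $\Sigma_m$ collapse $L^{w}$ to the plain Laplacian $\Delta_{\Sigma_m}$, while $A^{\partial \Sigma_{m-1}}(\nu_m, \nu_m) = 0$ simplifies the boundary operator to $\partial_\eta$. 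The linearized system therefore reduces to the mixed Neumann problem $-\Delta v = f + c,\; \partial_\eta v = g,\; \int v\, d\mu = h |\Sigma_m|$ for $(f,g,h) \in C^{0,\alpha}_{(0)} \times C^{1,\alpha}(\partial \Sigma_m) \times \mathbb{R}$; the Neumann compatibility condition uniquely pins down $c = -|\Sigma_m|^{-1}\int_{\partial \Sigma_m} g\, dA$ (using that $f$ is mean-zero), and standard elliptic theory supplies a unique $C^{2,\alpha}$ solution. Hence $D_u F|_{(0,0)}$ is an isomorphism and the implicit function theorem produces a smooth curve $t \mapsto u_t$ with $F(u_t,t) = 0$ and $u_0 = 0$.

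Finally, differentiating $F(u_t,t) = 0$ at $t = 0$ shows that $v_0 := \partial_t u_t|_{t=0}$ satisfies $\Delta v_0 = \text{const}$, $\partial_\eta v_0 = 0$, and $\int v_0\, d\mu = |\Sigma_m|$. Integration by parts kills the constant, and then $v_0$ is a Neumann harmonic function with unit mean, hence $v_0 \equiv 1$; by continuity $\partial_t u_t > 0$ on $\Sigma_m$ for $|t| < \epsilon$ after shrinking, so the map $f_t(x) := \phi_{u_t(x)}(x)$ produces a genuine foliation $\Sigma_{m,t} := f_t(\Sigma_m)$. The principal obstacle is the boundary bookkeeping: one must arrange the ambient extension $X$ so that the codimension-one free-boundary constraint $\partial \Sigma_u \subset \partial \Sigma_{m-1}$ is built into the domain of $F$, and then rely crucially on Proposition \ref{rigidity}(8) to eliminate the zeroth-order term in the boundary operator—without it the linearized boundary problem would have a nontrivial Robin-type coefficient and the clean invertibility argument would fail.
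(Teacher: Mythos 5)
Your proposal is correct and follows essentially the same route as the paper: set up a map on H\"older spaces whose zeros are the constant-weighted-mean-curvature free-boundary graphs, use Proposition \ref{rigidity} (items (3), (6), (8)) to collapse the linearization to the Neumann problem for $\Delta_{\Sigma_m}$, and conclude via the implicit/inverse function theorem. The only differences are cosmetic bookkeeping — the paper encodes the Neumann compatibility condition in the Banach space $\mathcal{W}=\{(f,g):\int_{\Sigma_m}f=\int_{\partial\Sigma_m}g\}$ rather than projecting onto mean-zero functions, and your use of a flow tangent to $\partial\Sigma_{m-1}$ to keep $\partial\Sigma_u\subset\partial\Sigma_{m-1}$ is if anything slightly more careful than the paper's normal exponential map.
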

\begin{proof}
    Consider the Banach space $\mathcal{W}=\{(f,g)\in C^{\alpha}(\Sigma_{m})\times C^{\alpha}(\partial \Sigma_{m})|\int_{\Sigma_{m}}f-\int_{\partial \Sigma_{m}}g=0\}$. Given a function $u\in C^{2,\alpha}(\Sigma_{m})$, let $H_{\Sigma_u}$ denote the mean curvature of the graph $\Sigma_u$. Denote $\tilde{H}_{\Sigma_{u}}=H_{\Sigma_{u}}+\langle \nabla_{\Sigma_{m-1}}\log\rho_{m-1},\nu_{u}\rangle$ where $\nu_{u}$ is the unit normal vector field of $\Sigma_{u}$ and $\eta
    _{u}$ is the unit normal vector field of $\partial \Sigma_{m-1}\hookrightarrow \Sigma_{m-1}$ restricted to $\partial \Sigma_{u}$. Notice that $\Sigma_{m}$ is free boundary, then $\eta:=\eta_{u}|_{u=0} = \eta_{\Sigma_{m}}$, where $\eta_{\Sigma_{m}}$ is the outward conormal vector field of $\Sigma_{m-1}$. We can define a map $\Psi:C^{2,\alpha}(\Sigma_{m})\rightarrow\mathcal{W}\times \mathbb{R}$ as the following
    \[
    \Psi(u)=\Big(\big(\tilde{H}_{\Sigma_{u}}-\frac{1}{|\Sigma_{m}|}\int_{\Sigma_{m}}\tilde{H}_{\Sigma_{u}}+\frac{1}{|\Sigma_{m}|}\int_{\partial\Sigma_{m}}\langle \nu_{u},\eta_{u}\rangle,\langle\nu_{u},\eta_{u}\rangle\big),\frac{1}{|\Sigma_{m}|}\int_{\Sigma_{m}}u\Big).
    \]
    It is easy to check that $\Psi(0)=((0,0),0)$. Now we want to compute $D\Psi|_{u=0}$. We notice that
    \[
    \begin{aligned}
    &D\tilde{H}_{\Sigma_{u}}|_{u=0}(w)\\
    =&-\Delta_{\Sigma_{m}}w-|A^{\Sigma_{m}}|^2w-\text{Ric}_{\Sigma_{m-1}}(\nu_{m},\nu_{m})w+(\nabla^{2}_{\Sigma_{m-1}}\log \rho_{m-1})(\nu_{m},\nu_{m})w\\
    =&-\Delta_{\Sigma_{m}}w
    \end{aligned}
    \]
    where we applied $(3)$ and $(6)$ in Proposition \ref{rigidity}. 
    We also compute 
    \[
    \begin{aligned}
    D\langle\nu_{u},\eta_{u}\rangle|_{u=0}(w) 
    &=\langle -\nabla_{\Sigma_{m-1}}w,\eta\rangle + w\langle \nu_{m},\nabla_{\nu_{m}}\eta\rangle\\
    &=-\frac{\partial w}{\partial \eta}+ wA^{\partial \Sigma_{m-1}}(\nu_{m},\nu_{m})\\
    &=-\frac{\partial w}{\partial \eta}
    \end{aligned}
    \]
    where the final equality is due to $A^{\partial \Sigma_{m-1}}(\nu_{m},\nu_{m})=0$ from Proposition \ref{rigidity}. Thus, we have
    \[
    D\Psi|_{u=0}(w) = \Big(\big(-\Delta_{\Sigma_{m}}w,-\frac{\partial w}{\partial\eta}\big),\frac{1}{|\Sigma_{m}|}\int_{\Sigma_{m}}w\Big)
    \]
    where we used the fact that $\int_{\Sigma_{m}}\Delta_{\Sigma_{m}}w=\int_{\partial \Sigma_{m}}\frac{\partial w}{\partial \eta}$ and $\eta = \eta_{\Sigma_{m}}$.
    By the uniqueness and existence of solutions to the elliptic equation under Nuemann boundary condition, we obtain that $D\Psi|_{u=0}$ is injective and surjective. According to the inverse function theorem, there exists a small $\epsilon>0$ such that for any $t\in (-\epsilon,\epsilon)$, there exists $u_{t}$ satisfying $\Psi(u_{t})=((0,0),t)$. Differentiating the equation $\Psi(u_{t})=((0,0),t)$, we have
    \[
    \Big(\big(-\Delta_{\Sigma_{m}}(\left.\frac{\partial }{\partial t}\right|_{t=0}u_{t}),-\frac{\partial}{\partial\eta}(\left.\frac{\partial}{\partial t}\right|_{t=0}u_{t})\big),\frac{1}{|\Sigma_{m}|}\int_{\Sigma_{m}}(\left.\frac{\partial}{\partial t}\right|_{t=0}u_{t})\Big)=(0,0,1).
    \]
    Then $\left.\frac{\partial}{\partial t}\right|_{t=0}u_{t}=1$. Taking $\epsilon$ smaller, we get $\partial_{t}u_{t}>0$. 
\end{proof}

Finally, we use a classical argument to show that the foliation constructed above consists of free boundary minimizing hypersurfaces (see \cite{XuTrans}).

\begin{proposition}\label{foliation2}
    Let $\Sigma_{m,t}$ be constructed as above. If $n\leq 6$, $\Sigma_{m,t}$ is a free boundary $\mathcal{H}_{\rho_{m-1}}$-minimizing hypersurface. Moreover, for any $ -\epsilon<t<\epsilon$, we have
    \begin{equation}\label{eqfolaition}
        \int_{\Sigma_{m}}\rho_{m-1}d\mu = \int_{\Sigma_{m,t}}\rho_{m-1}d\mu.
    \end{equation}
\end{proposition}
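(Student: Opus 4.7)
The plan is to reduce the proposition to the identity $A(t) := \mathcal{H}_{\rho_{m-1}}(\Sigma_{m,t}) \equiv A(0)$ on $(-\epsilon,\epsilon)$. Once this is established, each $\Sigma_{m,t}$ lies in the same relative homology class as the minimizer $\Sigma_m$ and achieves the same weighted area, so it is itself a minimizer, and \eqref{eqfolaition} is automatic.

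The first two steps are straightforward. The graphical foliation shows that $\Sigma_m$ and $\Sigma_{m,t}$, together with the portion of $\partial\Sigma_{m-1}$ swept out by the boundary, bound a region in $\Sigma_{m-1}$, so $[\Sigma_{m,t}] = [\Sigma_m]$ in $H_{n-m}(\Sigma_{m-1},\partial\Sigma_{m-1};\mathbb{Z})$; combined with the minimality of $\Sigma_m$, this yields $A(t) \geq A(0)$. Next I apply the first variation formula from Section \ref{section2} to the variational vector field $X = \partial_t f_t$. Along $\partial\Sigma_{m,t}$ the vector $X$ is tangent to $\partial\Sigma_{m-1}$ (since the foliation preserves the free boundary condition), while by Lemma \ref{boundary}(1) the conormal $\eta_{\Sigma_{m,t}}=\eta_t$ is normal to $\partial\Sigma_{m-1}$, so the boundary term vanishes. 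With $\phi_t := \langle X,\nu_{m,t}\rangle$ positive (from $\partial_t u_t > 0$) and $c(t)$ denoting the constant value of $\tilde{H}_{\Sigma_{m,t}}$ on the leaf, we obtain
\[
A'(t) = c(t)\,V'(t), \qquad V'(t) := \int_{\Sigma_{m,t}} \rho_{m-1}\phi_t\, d\mu > 0.
\]

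The core step is to upgrade this to $c(t)\equiv 0$. Since $A$ attains its minimum at $t=0$, $A'(0)=0$ forces $c(0)=0$. Differentiating $\tilde{H}_{\Sigma_{m,t}}=c(t)$ at $t=0$ expresses $c'(0)=L(1)$, where $L$ is the weighted Jacobi operator on $\Sigma_m$. The rigidity from Proposition \ref{rigidity} — in particular $|A^{\Sigma_m}|^2 = 0$, $\text{Ric}_{\Sigma_{m-1}}(\nu_m,\nu_m) = (\nabla^2\log\rho_{m-1})(\nu_m,\nu_m)$, $\nabla_{\Sigma_{m-1}}\log\rho_{m-1}=0$ along $\Sigma_m$, and $A^{\partial\Sigma_{m-1}}(\nu_m,\nu_m)=0$ — collapses $L$ to $-\Delta_{\Sigma_m}$ with Neumann boundary conditions, so $c'(0) = L(1) = 0$. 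I then follow the classical argument of \cite{XuTrans}: under the dimension restriction $n\leq 6$, which keeps the full rigidity of Proposition \ref{rigidity} available on any weighted minimizer, one combines the ODE $A' = cV'$, the inequality $A \geq A(0)$, and the CMC foliation structure to conclude $A\equiv A(0)$, equivalently $c\equiv 0$, on the entire interval.

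The main obstacle is this last extension. The infinitesimal vanishing $c(0)=c'(0)=0$ comes cheaply, but neither the ODE $A' = cV'$ nor the non-negativity $A \geq A(0)$ alone rules out an infinite-order zero of $c$ at $t=0$. The strategy in \cite{XuTrans} closes this gap by working with the set $\{t : A(t) = A(0)\}$: it is automatically closed by continuity, and its openness follows by propagating the rigidity of Proposition \ref{rigidity} from any minimizing leaf $\Sigma_{m,t_0}$ (which inherits the conclusions of the proposition verbatim) to a neighborhood, using the uniqueness of the CMC foliation constructed in Lemma \ref{foliation}; connectedness of $(-\epsilon,\epsilon)$ then delivers $A \equiv A(0)$ and hence $c \equiv 0$.
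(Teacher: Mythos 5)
Your setup (the first variation identity $A'(t)=c(t)V'(t)$ with vanishing boundary term, the lower bound $A(t)\geq A(0)$ from minimality, and $c(0)=0$) matches the paper, but the core of your argument has a genuine gap. The paper does not argue via $c(0)=c'(0)=0$ plus an open--closed continuation; it proves the \emph{global} monotonicity $\frac{d}{dt}\tilde{H}_{\Sigma_{m,t}}\leq 0$ on all of $(-\epsilon,\epsilon)$. This is done by writing $\frac{d\tilde H_{\Sigma_{m,t}}}{dt}\int_{\Sigma_{m,t}}\varphi_t^{-1}$ as an integral over the leaf plus a boundary integral, estimating the interior integrand by the pointwise inequality of Xu \cite{XuTrans} — which uses only the ambient hypothesis $C_m\geq 0$ on \emph{every} leaf, not the rigidity of Proposition \ref{rigidity} on minimizing leaves, and which is exactly where the algebraic condition $\frac{2m}{(n-m)(m-1)}\geq 1$ (hence $n\leq 6$) enters — and showing the boundary integral equals $\int_{\partial\Sigma_{m,t}}H_m^{\partial M}(\nu_1,\dots,\nu_{m-1},\nu_{m,t})\,dA\geq 0$ after differentiating the free boundary condition $\langle\nu_t,\eta_t\rangle=0$ to get $\frac{\partial\varphi_t}{\partial\eta_t}=\varphi_t A^{\partial M}(\nu_{m,t},\nu_{m,t})$. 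Monotonicity plus $\tilde H_{\Sigma_{m,0}}=0$ gives $c(t)\leq 0$ for $t>0$ (and $\geq 0$ for $t<0$), hence $A(t)\leq A(0)$, which combined with $A(t)\geq A(0)$ closes the proof.

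Your substitute for this step does not work as stated. Openness of $\{t: A(t)=A(0)\}$ does not follow from ``propagating the rigidity from a minimizing leaf via uniqueness of the CMC foliation'': the rigidity on $\Sigma_{m,t_0}$ only yields $c(t_0)=c'(t_0)=0$, which is again infinitesimal information and (as you yourself note for $t_0=0$) does not exclude $c$ having a zero of infinite order while being nonzero nearby; there is no ODE of the form $c'=F(c)$ here to invoke uniqueness. You also misplace the role of the hypothesis $n\leq 6$: it is not needed to ``keep Proposition \ref{rigidity} available'' (that proposition holds under the assumptions of Theorem \ref{main theorem 1 Introduction}(2)), but to make Xu's quadratic-form estimate of the interior integrand non-positive. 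Finally, you never address the boundary term arising in $\frac{d\tilde H}{dt}$, which is the genuinely new free-boundary ingredient and requires the hypothesis $H_m\geq 0$ on $\partial M$.
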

\begin{proof}
    In the following we only consider the case $t>0$. The argument on  the case $t<0$ is similar. 
    It follows from the first variation formula that
    \[
    \begin{aligned}
    &\int_{\Sigma_{m,t}}\rho_{m-1}\ d\mu -\int_{\Sigma_{m}}\rho_{m-1}\ d\mu\\=
    &\int_{0}^{t}\int_{\Sigma_{m,s}}\rho_{m-1}\frac{\partial u_{s}}{\partial s}\tilde{H}_{\Sigma_{m,s}}d\mu ds+\int_{0}^{t}\int_{\partial \Sigma_{m,s}}\rho_{m-1}\frac{\partial u_{s}}{\partial s}\langle \nu_{s},\eta_{s}\rangle dA ds\\=&\int_{0}^{t}\int_{\Sigma_{m,s}}\rho_{m-1}\frac{\partial u_{s}}{\partial s}\tilde{H}_{\Sigma_{m,s}}d\mu ds.
    \end{aligned}
    \]
   The second equality holds due to the fact that each $\Sigma_{m,s}$ is free boundary. We claim that $\tilde{H}_{\Sigma_{m,t}}\leq 0$ for $t>0$. Then we have $\int_{\Sigma_{m,t}}\rho_{m-1}\ d\mu \leq \int_{\Sigma_{m}}\rho_{m-1}\ d\mu$. Since $\Sigma_{m}$ is $\mathcal{H}_{\rho_{m-1}}$-minimizing, we obtain that $\Sigma_{m,t}$ is also $\mathcal{H}_{\rho_{m-1}}$-minimizing hypersurface, completing the proof of the Proposition.
   
   In what follows  we prove the claim. Denote $\varphi_{t}\nu_{m,t}$ the variational vector field, we directly compute $\frac{d \tilde{H}_{\Sigma_{m,t}}}{d t}$. Note that
   \[
   \begin{aligned}
   \frac{d\tilde{H}_{\Sigma_{m,t}}}{dt}
   &=-\Delta_{\Sigma_{m,t}}\varphi_{t}-(|A^{\Sigma_{m,t}}|^2+\text{Ric}_{\Sigma_{m-1}}(\nu_{m,t},\nu_{m,t}))\varphi_{t}\\
   &+\nabla_{\Sigma_{m-1}}^{2}\log\rho_{m-1}(\nu_{m,t},\nu_{m,t})\varphi_{t}-\langle \nabla_{\Sigma_{m,t}}\varphi_{t},\nabla_{\Sigma_{m,t}}\log\rho_{m-1}\rangle.
   \end{aligned}
   \]
Multiplying both sides by $\varphi_{t}^{-1}$ and integrating over $\Sigma_{m,t}$, we have
\[
\begin{aligned}
    &\frac{d\tilde{H}_{\Sigma_{m,t}}}{dt}\int_{\Sigma_{m,t}}\varphi_{t}^{-1}d\mu\\
   &=\int_{\Sigma_{m,t}}\Big[-\varphi_{t}^{-1}\Delta_{\Sigma_{m,t}}\varphi_{t}-(|A^{\Sigma_{m,t}}|^2+\text{Ric}_{\Sigma_{m-1}}(\nu_{m,t},\nu_{m,t}))\\
   &+\nabla_{\Sigma_{m-1}}^{2}\log\rho_{m-1}(\nu_{m,t},\nu_{m,t})-\langle \nabla_{\Sigma_{m,t}}\log\varphi_{t},\nabla_{\Sigma_{m,t}}\log\rho_{m-1}\rangle\Big]d\mu.
\end{aligned}
\]
Let $\tilde{\rho}_{m}=\varphi_{t}\rho_{m-1}$, using integration by part yields
\[
\begin{aligned}
    &\int_{\Sigma_{m,t}}\Big[-\varphi_{t}^{-1}\Delta_{\Sigma_{m,t}}\varphi_{t}-\langle \nabla_{\Sigma_{m,t}}\log\varphi_{t},\nabla_{\Sigma_{m,t}}\log\rho_{m-1}\rangle\Big]d\mu\\
    &=-\int_{\Sigma_{m,t}}\langle \nabla_{\Sigma_{m,t}}\log\varphi_{t},\nabla_{\Sigma_{m,t}}\log\tilde{\rho}_{m}\rangle d\mu-\int_{\partial\Sigma_{m,t}}\varphi_{t}^{-1}\frac{\partial \varphi_{t}}{\partial \eta_{t}}dA.
\end{aligned}
\]
and
\[
\begin{aligned}
    &\int_{\Sigma_{m,t}}\nabla_{\Sigma_{m-1}}^{2}\log\rho_{m-1}(\nu_{m,t},\nu_{m,t})d\mu\\
    &=\int_{\Sigma_{m,t}}\Big(\Delta_{\Sigma_{m-1}}\log\rho_{m-1}-\Delta_{\Sigma_{m,t}}\log\rho_{m-1}-H_{\Sigma_{m,t}}\langle \nabla_{\Sigma_{m-1}}\log\rho_{m-1},\nu_{m,t}\rangle\Big)d\mu \\
    &=\int_{\Sigma_{m,t}}\Big(\Delta_{\Sigma_{m-1}}\log\rho_{m-1}-H_{\Sigma_{m,t}}\langle \nabla_{\Sigma_{m-1}}\log\rho_{m-1},\nu_{m,t}\rangle \Big)d\mu-\int_{\partial \Sigma_{m,t}}\frac{\partial \log\rho_{m-1}}{\partial \eta_{t}}dA.
\end{aligned}
\]
Combining all we obtain
\[
\begin{aligned}
    &\frac{d\tilde{H}_{\Sigma_{m,t}}}{dt}\int_{\Sigma_{m,t}}\varphi_{t}^{-1}d\mu\\
   &=\int_{\Sigma_{m,t}}\Big[-\langle \nabla_{\Sigma_{m,t}}\log\varphi_{t},\nabla_{\Sigma_{m,t}}\log\tilde{\rho}_{m}\rangle-|A^{\Sigma_{m,t}}|^2-\text{Ric}_{\Sigma_{m-1}}(\nu_{m,t},\nu_{m,t})
   \\
   &+\Delta_{\Sigma_{m-1}}\log\rho_{m-1}-H_{\Sigma_{m,t}}\langle \nabla_{\Sigma_{m-1}}\log\rho_{m-1},\nu_{m,t}\rangle\Big]d\mu\\
   &-\int_{\partial \Sigma_{m,t}}\Big(\frac{\partial \log\rho_{m-1}}{\partial \eta_{t}}+\varphi_{t}^{-1}\frac{\partial \varphi_{t}}{\partial \eta_{t}}\Big)dA.
\end{aligned}
\]
The integrand over $\Sigma_{m,t}$ is estimated in \cite{XuTrans}. The following is taken from \cite[page 2109 - page 2110]{XuTrans}.
\[
\begin{aligned}
    &\int_{\Sigma_{m,t}}\Big[-\langle \nabla_{\Sigma_{m,t}}\log\varphi_{t},\nabla_{\Sigma_{m,t}}\log\tilde{\rho}_{m}\rangle-|A^{\Sigma_{m,t}}|^2-\text{Ric}_{\Sigma_{m-1}}(\nu_{m,t},\nu_{m,t})
   \\
   &+\Delta_{\Sigma_{m-1}}\log\rho_{m-1}-H_{\Sigma_{m,t}}\langle \nabla_{\Sigma_{m-1}}\log\rho_{m-1},\nu_{m,t}\rangle\Big]d\mu\\
   &\leq \int_{\Sigma_{m,t}}\Big[-\frac{H_{\Sigma_{m,t}}^2}{n-m}-H_{\Sigma_{m,t}}\langle\nabla_{\Sigma_{m-1}}\log\rho_{m-1},\nu_{m}\rangle-\frac{m}{2(m-1)}\langle\nabla_{\Sigma_{m-1}}\log\rho_{m-1},\nu_{m}\rangle^2\\
   &-\frac{m}{2(m-1)}|\nabla_{\Sigma_{m}}\log\rho_{m-1}|^2-\langle\nabla_{\Sigma_{m}}\log\varphi_{t},\nabla_{\Sigma_{m}}(\log\rho_{m-1}+\log\varphi_{t})\rangle\Big]d\mu.
\end{aligned}
\]
From this to be non-positive, we need
\[
\frac{2m}{(n-m)(m-1)}\geq1
\]
which is satisfied for $n\leq 6$ and $m\leq n-1$ or $n=7$ and $m\in \{1,5,6\}$.
Now, we estimate the integration over $\partial \Sigma_{m,t}$. Since $\langle\nu_{t},\eta_{t}\rangle=0$, we get
\[
0=\frac{d}{dt}\langle\nu_{t},\eta_{t}\rangle=-\frac{\partial\varphi_{t}}{\partial \eta_{t}}+\varphi_{t}A^{\partial M}(\nu_{m,t},\nu_{m,t}).
\]
Then
\[
\begin{aligned}
\int_{\partial \Sigma_{m,t}}\Big(\frac{\partial \log\rho_{m-1}}{\partial \eta_{t}}+\varphi_{t}^{-1}\frac{\partial \varphi_{t}}{\partial \eta_{t}}\Big)dA
&=\int_{\partial \Sigma_{m,t}}\Big(\frac{\partial \log\rho_{m-1}}{\partial \eta_{t}}+A^{\partial M}(\nu_{m,t},\nu_{m,t})\Big)dA\\
&=\int_{\partial \Sigma_{m,t}}H_{m}^{\partial M}(\nu_{1},...,\nu_{m-1},\nu_{m,t})dA\geq0. 
\end{aligned}
\]
Hence, $\frac{d\tilde{H}_{\Sigma_{m,t}}}{dt}\leq0$ and thus $\tilde{H}_{\Sigma_{m,t}}\leq 0$ as claimed.
\end{proof}

Now we are ready to prove the second part in Theorem \ref{main theorem 1 Introduction}.
\begin{proof}[The proof of (2) in Theorem \ref{main theorem 1 Introduction}]
    Using Proposition \ref{rigidity}, Lemma \ref{foliation} and Proposition \ref{foliation2}, there exists a local foliation $\{\Sigma_{m,t}\}$ of $\Sigma_{m}$ such that $\Sigma_{m,t}$ is free boundary weighted area-minimizing hypersurface in $\Sigma_{m-1}$. We denote the metric on $\Sigma_{m-1}$ and $\Sigma_{m,t}$ by $g_{m-1}$ and $g_{m,t}$. For each $t\in(-\epsilon,\epsilon)$, we can define the lapse function $h_{t} = \langle \nu_{m,t},\frac{\partial f_{t}}{\partial t}\rangle$. Then we can write $g_{m-1}=h_{t}^2dt^{2}+g_{m,t}$ locally. Since $\Sigma_{m,t}$ is free boundary weighted area-minimizing hypersurface in $\Sigma_{m-1}$, then $\Sigma_{m,t}$ also satisfies Proposition \ref{rigidity}. Then $\nabla_{\Sigma_{m-1}}\log\rho_{m-1}=0$ along $\Sigma_{m,t}$, which implies that $\rho_{m-1}$ is constant on $\{\Sigma_{m,t}\}_{-\epsilon<t<\epsilon}$. 
    According to Proposition \ref{foliation2}, we have
    \[
    \frac{d^2}{dt^2}\int_{\Sigma_{m,t}}\rho_{m-1}d\mu =0.
    \]
    Thus, the lapse function $h_{t}$ satisfy the Jacobi equation
    \[
    \Delta_{\Sigma_{m,t}}h_{t}+(\text{Ric}_{\Sigma_{m-1}}(\nu_{m,t},\nu_{m,t})+|A^{\Sigma_{m,t}}|^2)h_{t}=0.
    \]
    Notice that $\text{Ric}_{\Sigma_{m-1}}(\nu_{m,t},\nu_{m,t})=\nabla_{\Sigma_{m-1}}^{2}\log\rho_{m-1}(\nu_{m,t},\nu_{m,t})=0$. Moreover, Proposition \ref{rigidity} implies that $\Sigma_{m,t}$ is totally geodesic and $A^{\Sigma_{m,t}}=0$. Thus, $h_{t}$ satisfies 
    \[
        \Delta_{\Sigma_{m,t}}h_{t}=0.    
    \]
    Differentiating $\langle \nu_{m,t},\eta_{m,t}\rangle = 0$ along $h_{t}\nu_{m,t}$, we have $$\frac{\partial h_{t}}{\partial \eta}=h_{t}A^{\partial M}(\nu_{m,t},\nu_{m,t})=0.$$ Then we get $h_{t}$ is a constant and thus $h_{t}(x)=\phi(t)$. Notice $\Sigma_{m,t}$ is totally geodesic, then $\partial_{t}g_{m,t}=0$, which implies $g_{m-1}=\phi(t)^2dt^2+g_{m}$ locally. After reparametrizing $t$ by letting $s(t)=\int_{0}^{t}\phi(\tau)d\tau$, we get $g_{m-1}=ds^2+g_{m}$ in $\Sigma_{m}\times (-\epsilon,\epsilon)$. Using the continuity argument as in \cite{BrayBrenleNevesrigidity}, we conclude that $\Sigma_{m-1}$ is isometrically covered by $\Sigma_{m}\times \mathbb{R}$ and $\rho_{m-1}$ is constant on $\Sigma_{m-1}$. By same argument in \cite{ChuKwongLeerigidity}, we obtain
    \[
    C_{m-1}(\nu_{1},...,\nu_{m-1})=0.
    \]
    On the boundary, we have
    \[
        H_{m-1}(\nu_{1},...,\nu_{m-1})=H_{m}(\nu_{1},...,\nu_{m})-A^{\partial M}(\nu_{m},\nu_{m})=0.
    \]
    Repeating the argument inductively, we obtain that for any $k=1,\cdots,m$, $C_{k}(\nu_{1},...,\nu_{k})=0$, $H_{k}(\nu_{1},...,\nu_{k})=0$ and $\Sigma_{k-1}$ is isometrically covered by $\Sigma_{k}\times \mathbb{R}$. Hence, $M$ is isometrically covered by $\Sigma_{m}\times \mathbb{R}^{m}$. $\text{Ric}_{\Sigma_{m}}\geq 0$ is showed in \cite{ChuKwongLeerigidity}. Next, we show that $A^{\partial \Sigma_{m}}\geq 0$. In fact, for any $ X\in T_{p}(\partial \Sigma_{m})$, we have
    \[
    0\leq H_{m}(X,\nu_{1},...,\nu_{m-1})=A^{\partial M}(X,X)=A^{\partial \Sigma_{m}}(X,X).
    \]
    The proof is complete.
\end{proof}
To conclude this section, we show that the topological condition is preserved under connected sums with a compact manifold.

\begin{definition}\label{interior connected sum}
Let $M_1$ and $M_2$ be two $n$-manifolds with non-empty boundaries. The \textit{interior connected sum} $M_1\#_i M_2$ is constructed by removing the interiors of two standard balls $B_i^n \subset \operatorname{int}(M_i)$ and gluing the resulting boundary spheres $\partial B_i^n$ via an orientation-reversing diffeomorphism.
\end{definition}

\begin{definition}\label{boundary connected sum}
Let $M_1$ and $M_2$ be two $n$-manifolds with nonempty boundaries. Let $\partial_1 M_1$ and $\partial_2 M_2$ be two specific boundary components of $M_1$ and $M_2$, respectively. The \textit{boundary connected sum} $M_1\#_b M_2$ (with respect to $\partial_1M_1$ and $\partial_2 M_2$) is constructed by identifying two standard balls $B_i^{n-1}\subset \partial_i M_i$ via an orientation-reversing diffeomorphism.
\end{definition}

\begin{remark}
    (1) The boundary connected sum of two handbodies is again a handlebody;
    
    (2) The boundary connected sum of two punctured handlebodies is a punctured handlebody (specifically, with the number of punctures being the sum from each summand).
\end{remark}

\begin{corollary}\label{cor3}
    Let $3 \leq n \leq 7$, and let $(M, \partial M)$ be an $n$-dimensional compact orientable manifold satisfying the conditions of Theorem \ref{main theorem 1 Introduction}. For any $n$-dimensional compact orientable manifold $N$, the conclusion of Theorem \ref{main theorem 1 Introduction} holds for the manifold $M \# N$.
\end{corollary}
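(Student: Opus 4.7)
The plan is to reduce this to Theorem \ref{main theorem 1 Introduction} itself by checking that the cohomology hypothesis pulls back through a collapsing map $\pi: M \# N \to M$ which sends $N$ (together with its portion of the boundary) to a single point. For the interior connected sum this point lies in the interior of $M$; for the boundary connected sum it lies on the boundary component of $M$ along which the gluing was performed. In either case, after choosing a sufficiently small open (half-)ball neighborhood $U$ of the gluing locus in $M$, the map $\pi$ restricts to the identity on $M\setminus U$, and the inclusion $M\setminus U \hookrightarrow M$ induces an isomorphism on $H^{1}(-;\mathbb{Z})$: for the boundary sum this inclusion is a deformation retract, and for the interior sum removing an interior ball from an $n$-manifold with $n\geq 3$ does not alter $H^{1}$. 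Since $M\setminus U \hookrightarrow M$ factors as $M\setminus U \hookrightarrow M\#N \xrightarrow{\pi} M$, the induced map $\pi^{\ast}: H^{1}(M;\mathbb{Z}) \to H^{1}(M\#N;\mathbb{Z})$ is injective.

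Next, set $\beta^{j} := \pi^{\ast}\alpha^{j}$; injectivity gives $\beta^{j}\neq 0$, and functoriality of the cup product yields
\[
\beta^{1}\smile\cdots\smile\beta^{m} = \pi^{\ast}\bigl(\alpha^{1}\smile\cdots\smile\alpha^{m}\bigr).
\]
It remains to verify $(\beta^{1}\smile\cdots\smile\beta^{m})|_{\partial(M\#N)}\neq 0$. For the interior connected sum, $\partial(M\#_{i}N) = \partial M\sqcup\partial N$; since $\pi$ acts as the identity on $\partial M$ and crushes $\partial N$ to a point in $\mathrm{int}(M)$, the restriction to $\partial M$ equals $(\alpha^{1}\smile\cdots\smile\alpha^{m})|_{\partial M}$, which is nonzero by hypothesis. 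For the boundary connected sum along $\partial_{1}M\subset \partial M$ and $\partial_{2}N\subset \partial N$, the boundary decomposes as
\[
\partial(M\#_{b}N) = (\partial_{1}M\#\partial_{2}N)\sqcup(\partial M\setminus\partial_{1}M)\sqcup(\partial N\setminus\partial_{2}N),
\]
where $\pi$ is the identity on $\partial M\setminus\partial_{1}M$ and restricts to a degree-one collapsing map $q: \partial_{1}M\#\partial_{2}N\to\partial_{1}M$ on the connected-sum component. If $(\alpha^{1}\smile\cdots\smile\alpha^{m})|_{\partial M}$ has a nonzero image on some component other than $\partial_{1}M$, the identity restriction of $\pi$ preserves it; otherwise its image lies on $\partial_{1}M$, and a routine Mayer--Vietoris computation for the connected sum of closed orientable $(n-1)$-manifolds shows that $q^{\ast}: H^{k}(\partial_{1}M;\mathbb{Z})\to H^{k}(\partial_{1}M\#\partial_{2}N;\mathbb{Z})$ is injective for all $0\leq k\leq n-1$ (direct sum splitting in middle degrees, and an isomorphism at the top degree because $\deg q = 1$). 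In all cases the restriction remains nontrivial.

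With the hypotheses of Theorem \ref{main theorem 1 Introduction} verified for the classes $\beta^{1},\ldots,\beta^{m}$ on $M\#N$, the theorem applies directly and yields the desired nonexistence and (when the dimension constraints are satisfied) rigidity conclusions. I expect the principal subtlety to be in the boundary-connected-sum case, precisely because $\pi$ is not the identity on the component $\partial_{1}M\#\partial_{2}N$; the proof there depends on the degree-one property of the collapsing map $q$, which in turn relies on the orientability hypothesis on both $M$ and $N$. The remaining steps are essentially bookkeeping about how $\pi$ interacts with each piece of the two possible boundary structures.
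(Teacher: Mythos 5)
Your proposal is correct and follows essentially the same strategy as the paper: pull the classes back along the collapsing map, and verify that the cup-product hypothesis survives on the boundary, the only real content being injectivity of the induced map on the cohomology of the glued boundary component $\partial_{1}M\,\#\,\partial_{2}N$. The paper establishes that injectivity via excision and the long exact sequence of a pair (plus orientability in the top degree), rather than your Mayer--Vietoris/degree-one argument, but these are interchangeable standard computations.
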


\begin{proof}
    Consider the projection map $P: M \# N \to M$ that collapses $N \setminus \{\text{ball}\}$ to a point.  Via this map, we can pull back the cohomology classes $\alpha^1, \dots, \alpha^m$ from $M$ to obtain classes $P^*\alpha^1, \dots, P^*\alpha^m \in H^1(M \# N; \mathbb{Z})$. To apply Theorem \ref{main theorem 1 Introduction} to $M \# N$, we must verify that the cup product $P^*\alpha^1 \smile \cdots \smile P^*\alpha^m$ is non-zero both on $M \# N$ and on its restriction to the boundary $\partial(M \# N)$.

    We analyze the two types of connected sums separately.

    \textbf{Case 1: Interior Connected Sum ($M \#_i N$).}
    In this case, the boundary satisfies $\partial M\subset \partial(M \#_i N)$. Since the projection $P$ restricts to the identity on $\partial M$, the pullback $P^*$ induces the identity map on cohomology. By the assumption on $M$, we have $\alpha^1 \smile \cdots \smile \alpha^m|_{\partial M} \neq 0$. Therefore, it follows immediately that
    \[
    (P^*\alpha^1 \smile \cdots \smile P^*\alpha^m)|_{\partial(M \#_i N)} \neq 0.
    \]

    \textbf{Case 2: Boundary Connected Sum ($M \#_b N$).}
    WLOG, we can assume $\partial N$ is connected. Otherwise, we can replace $\partial N$ by the connected component which connects sum with $\partial M$. By definition, the boundary of the boundary connected sum is given by $\partial(M \#_b N) = \partial M \#_i \partial N$. 

     In fact, consider the pair $(\partial M \backslash D^{n-1},S^{n-2})\hookrightarrow(\partial M\#_{i}\partial N,\partial N\backslash D^{n-1})$, we have the following commutative diagram
    \[
    \begin{tikzcd}
    H^{m-1}(\partial N\backslash D^{n-1}) \arrow[r, "\delta"] \arrow[d, swap, "\iota^{*}"] & H^{m}(\partial M\#_{i}\partial N,\partial N\backslash D^{n-1}) \arrow[d, "\iota^{*}"] \\
    H^{m-1}(S^{n-2}) \arrow[r, swap, "\delta"] & H^{m}(\partial M \backslash D^{n-1},S^{n-2})
    \end{tikzcd}
    \]
    By excision lemma $H^{m}(\partial M\#_{i}\partial N,\partial N\backslash D^{n-1})=H^{m}(\partial M \backslash D^{n-1},S^{n-2})$ and $H^{m-1}(S^{n-2})=0$ for $1\leq m\leq n-2$, we obtain $\delta:H^{m-1}(\partial N\backslash D^{n-1})\rightarrow H^{m}(\partial M\#_{i}\partial N,\partial N\backslash D^{n-1})$ is zero map. By the long exact sequence,
    \[
     H^{m-1}(\partial N\backslash D^{n-1})\rightarrow H^{m}(\partial M\#_{i}\partial N,\partial N\backslash D^{n-1})\rightarrow H^{m}(\partial M\#_{i}\partial N).
    \]
    we obtain $P^{*}:H^{m}(\partial M\#_{i}\partial N,\partial N\backslash D^{n-1})\rightarrow H^{m}(\partial M\#_{i}\partial N)$ is injective. Notice that $H^{m}(\partial M) = H^{m}(\partial M\#_{i}\partial N,\partial N\backslash D^{n-1})$, we get $P^{*}$ is injective.
    Since $\partial N$ is orientable, in the top degree we have $P^*:H^{n-1}(\partial M) \cong H^{n-1}(\partial M \#_i \partial N)$. In conclusion, this induces an injection $P^*: H^*(\partial M; \mathbb{Z}) \hookrightarrow H^*(\partial(M \#_b N); \mathbb{Z})$ on cohomology. Therefore, since $\alpha^1 \smile \cdots \smile \alpha^m|_{\partial M} \neq 0$, we conclude that
    \[
    (P^*\alpha^1 \smile \cdots \smile P^*\alpha^m)|_{\partial(M \#_b N)} = P^*(\alpha^1 \smile \cdots \smile \alpha^m|_{\partial M}) \neq 0.
    \]

    In both cases, the necessary cohomological conditions are satisfied by $M \# N$. The non-vanishing of the cup product $P^*\alpha^1 \smile \cdots \smile P^*\alpha^m$ on the interior of $M \# N$ follows directly since it is non-vanishing on the boundary. The rest of proof can now be completed by following the same argument as in the proof of Theorem \ref{main theorem 1 Introduction}.
\end{proof}

 When the manifold we glue is nonorientable, we obtain the nonexistence result.
\begin{corollary}\label{cor2}
    Let $3\leq n\leq 7$, $1\leq m\leq n-1$ and let $(M,\partial M)$ be an $n$-dimensional compact orientable manifold. Suppose there exist nonzero $  \alpha^{1},...,\alpha^{m}\in H^{1}(M,\mathbb{Z})$ such that $\alpha^{1}\smile\cdots\smile \alpha^{m}\neq 0$. Then for any $n$-dimensional compact manifold $N$, the connected sum $M\# N$ does not admit a complete Riemannian metric $g$ of positive $m$-intermediate curvature and $m$-convex boundary.
\end{corollary}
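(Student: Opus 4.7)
The plan is to reduce to Theorem \ref{main theorem 1 Introduction}(1) by passing to the orientation double cover and using the weaker cohomological hypothesis allowed by Remark \ref{without boundary condition}. Suppose for contradiction that $M\#N$ admits a metric $g$ with $C_m>0$ and $H_m\geq 0$. If $M\#N$ is orientable (equivalently, if $N$ is orientable), I set $X=M\#N$; otherwise I let $\pi\colon X\to M\#N$ be the orientation double cover. Since the orientable summand $M$ contributes nothing to $w_1$, in the nonorientable case $X$ is diffeomorphic to
\[
M\#\widetilde{N}\#M,
\]
where $\widetilde{N}$ is the orientation double cover of $N$ and the connected sums are interior or boundary according to the given $\#$. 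In all cases $X$ is compact, connected, and orientable, and the pulled-back metric inherits $C_m>0$ and $H_m\geq 0$ because these are pointwise conditions preserved by local isometries.

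Next I would construct cohomology classes on $X$ whose $m$-fold cup product is nonzero. Let $P\colon X\to M$ be the map that collapses to a point every piece of $X$ outside one distinguished copy of $M\setminus(\text{ball})$; this is the original $M$-summand (or one of its two lifts, in the nonorientable case). With $\iota\colon M\setminus(\text{ball})\hookrightarrow X$ denoting the inclusion of this copy, the composition $P\circ\iota$ is homotopic to the natural inclusion $M\setminus(\text{ball})\hookrightarrow M$, which induces an isomorphism on $H^k$ for $1\leq k\leq n-1$. Hence $P^{*}\colon H^{*}(M;\mathbb{Z})\to H^{*}(X;\mathbb{Z})$ admits a left inverse in this range and is injective. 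Setting $\beta^{i}:=P^{*}\alpha^{i}$, I obtain
\[
\beta^{1}\smile\cdots\smile\beta^{m}\;=\;P^{*}(\alpha^{1}\smile\cdots\smile\alpha^{m})\;\neq\;0\quad\text{in }H^{m}(X;\mathbb{Z}).
\]

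With this topological input I would invoke Lemma \ref{existence of slicing} in the form permitted by Remark \ref{without boundary condition}, producing a stable free boundary weighted $m$-slicing $\Sigma_m\subset\cdots\subset\Sigma_0=X$ in which the individual $\Sigma_j$ may have empty boundary. Repeating the calculation in the proof of Theorem \ref{main theorem 1 Introduction}(1) verbatim yields
\[
\int_{\Sigma_m}\rho_{m-1}^{-1}\,C_m(\nu_1,\dots,\nu_m)\,d\mu+\int_{\partial\Sigma_m}\rho_{m-1}^{-1}\,H_m(\nu_1,\dots,\nu_m)\,dA\leq 0,
\]
the boundary integral being simply absent when $\partial\Sigma_m=\emptyset$. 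Since $C_m>0$ strictly, the first integral is strictly positive; together with $H_m\geq 0$ on the boundary, this contradicts the inequality.

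The main obstacle is the topological step: verifying that the cup product survives the orientation lift when $N$ is nonorientable. The key point is the existence of the retraction-like projection $P\colon X\to M$ whose restriction to the distinguished $M$-summand gives a cohomological section, which immediately yields injectivity of $P^{*}$ on $H^k$ for $1\leq k\leq n-1$. Once this is secured, the analytic portion proceeds unchanged from Theorem \ref{main theorem 1 Introduction}(1), with the only wrinkle that the bottom-level minimizer $\Sigma_m$ may have empty boundary, in which case positivity of $C_m$ alone closes the argument.
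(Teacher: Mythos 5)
Your proposal is correct and follows the same overall strategy as the paper: build a collapse map $P$ onto $M$, show $P^{*}$ is injective so that the $m$-fold cup product survives to the connected sum, and then run \cref{existence of slicing} in the weakened form of \cref{without boundary condition} together with the integral inequality from the proof of \cref{main theorem 1 Introduction}(1), where positivity of $C_m$ in the interior closes the argument even when $\Sigma_m$ is closed. Two points of comparison are worth noting. First, you prove injectivity of $P^{*}$ by exhibiting a homotopy section ($P\circ\iota$ homotopic to the inclusion of $M$ minus a ball), whereas the paper argues via excision and the long exact sequence of the pair $(M\#_b N,N)$; the two are essentially equivalent, though your claim that $H^{k}(M)\to H^{k}(M\setminus B)$ is an \emph{isomorphism} for $k=n-1$ is slightly too strong in general --- only injectivity holds there, and only injectivity is needed. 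Second, and more substantively, you explicitly pass to the orientation double cover $M\#\widetilde{N}\#M$ when $N$ is nonorientable before invoking \cref{existence of slicing}; this step is genuinely necessary, since \cref{existence of slicing} (and the underlying geometric measure theory minimization) requires an orientable ambient manifold, and the paper's written proof of \cref{cor2} is silent on it. Your treatment therefore fills a gap the paper leaves implicit, and your identification of the double cover and the verification that $C_m>0$, $H_m\geq 0$ lift are correct.
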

\begin{proof}
    Similar to the proof of Corollary \ref{cor3}, we consider the map $P:M\#N\rightarrow M$. We want to show $P^*\alpha^{1}\smile \cdots \smile P^{*}\alpha^{m}\neq 0\in H^{m}(M\#N,\mathbb{Z})$. It suffices to prove that $P^*$ is injective. When $\#$ denotes the interior connected sum, the proof is similar to the proof of Corollary \ref{cor3}. We only show the case of the boundary connected sum.  
      

    In fact, consider the pair $(M,D^{n-1})\hookrightarrow(M\#_{b}N,N)$, we have the following commutative diagram
    \[
    \begin{tikzcd}
    H^{m-1}(N) \arrow[r, "\delta"] \arrow[d, swap, "\iota^{*}"] & H^{m}(M\#_{b}N,N) \arrow[d, "\iota^{*}"] \\
    H^{m-1}(D^{n-1}) \arrow[r, swap, "\delta"] & H^{m}(M,D^{n-1})
    \end{tikzcd}
    \]
 By excision lemma $H^{m-1}(M\#_{b}N,N)=H^{m}(M,D^{n-1})$ and $H^{m-1}(D^{n-1})=0$, we obtain $\delta:H^{m-1}(N)\rightarrow H^{m}(M\#_{b}N,N)$ is zero map. By the long exact sequence,
\[
    H^{m-1}(N)\rightarrow H^{m}(M\#_{b}N,N)\rightarrow H^{m}(M\#_{b}N).
\]
we obtain $P^{*}:H^{m}(M\#_{b}N,N)\rightarrow H^{m}(M\#_{b} N)$ is injective. Notice that $H^{m}(M\#_{b}N,N) = H^{m}(M)$, we get $P^{*}$ is injective.
\end{proof}
\section{Nonexistence under connected sum}\label{section 4}
In this section, we would like to study whether the manifold in Theorem \ref{main theorem 1 Introduction} with punctures still admit a metric of positive scalar curvature and mean curvature. In fact, we can prove the following. 

\begin{theorem}\label{main theorem 2}
    Assume either $3\leq n\leq 5$, $1\leq m\leq n-1$ or $6\leq n\leq 7$, $m\in\{1,n-2,n-1\}$. Let $(M,\partial M)$ be a compact orientable manifold with dimension $n$. Suppose that there exist nonzero $  \alpha^{1},...,\alpha^{m}\in H^{1}(M,\mathbb{Z})$ such that $\alpha^{1}\smile\cdots\smile \alpha^{m}\neq 0$. For any $n$-dimensional manifold $N$, the connected sum $M\# N$ does not admit a complete Riemannian metric $g$ of positive $m$-intermediate curvature $C_{m}$ and $m$-convex boundary.
\end{theorem}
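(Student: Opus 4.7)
The plan is to adapt the free-boundary weighted slicing of Section \ref{section 3} to the non-compact setting of a connected sum by replacing the outermost slice with a free-boundary $\mu$-bubble, following the strategy of Chodosh-Li \cite{chodoshlisoapbubble} and Chen \cite{chenshuli_end}. Suppose toward contradiction that $M\#N$ carries a complete metric $g$ with $C_{m}>0$ and $H_{m}\ge 0$. By the cohomological arguments of Corollary \ref{cor2}, the collapse map $P\colon M\#N\to M$ is injective on $H^{\ast}(\cdot;\bZ)$, so $\beta^{i}:=P^{\ast}\alpha^{i}\in H^{1}(M\#N;\bZ)$ still satisfy $\beta^{1}\smile\cdots\smile\beta^{m}\neq 0$.

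First, pass to the infinite cyclic cover $\pi\colon \widetilde{M\#N}\to M\#N$ corresponding to $\beta^{1}$; the class $\pi^{\ast}\beta^{1}$ is represented by a proper height function $\tau$, and the lifts of the connected-sum region form a discrete $\bZ$-orbit. For $T\gg 1$, choose a warping function $h\colon(-T,T)\to\bR$ that blows up to $\pm\infty$ at $\pm T$ and satisfies a Gromov-type ODE $h'+\tfrac{1}{2}h^{2}\le $ const, tailored to the interior curvature quantity used below. Minimize
\[
\mA(\Omega)=\mathrm{Area}\bigl(\partial^{\ast}\Omega\setminus\partial\widetilde{M\#N}\bigr)-\int_{\Omega}h\,d\mathrm{vol}
\]
over Caccioppoli sets $\Omega$ with $\{\tau<-T+\epsilon\}\subset\Omega\subset\{\tau<T-\epsilon\}$. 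The barrier behaviour of $h$ forces the minimizer $\Sigma_{1}:=\partial^{\ast}\Omega$ to be compact; for $n\le 7$, Gr\"uter-Jost-type free-boundary regularity gives a smooth hypersurface meeting $\partial\widetilde{M\#N}$ orthogonally, with prescribed mean curvature $H_{\Sigma_{1}}=h$, and a positive first eigenfunction $\rho_{1}$ from the second-variation.

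Second, since $[\Sigma_{1}]$ is Poincar\'e-Lefschetz dual to $\pi^{\ast}\beta^{1}$ in $\widetilde{M\#N}$, the cup product $\beta^{2}|_{\Sigma_{1}}\smile\cdots\smile\beta^{m}|_{\Sigma_{1}}$ is nonzero. Applying Lemma \ref{existence of slicing} inside the compact manifold-with-boundary $\Sigma_{1}$ yields a free-boundary weighted $(m-1)$-slicing $\Sigma_{m}\subset\cdots\subset\Sigma_{1}$. Testing the iterated stability inequality with $\rho_{m-1}^{-1}$ and combining the interior Brendle-Hirsch-Johne estimate \cite[Lemma 3.1]{brendlegeroch'sconjecture}, the extra $\mu$-bubble gradient terms, and the boundary identity \eqref{eq1} yields
\[
\int_{\Sigma_{m}}\rho_{m-1}^{-1}C_{m}(\nu_{1},\ldots,\nu_{m})\,d\mu+\int_{\partial\Sigma_{m}}\rho_{m-1}^{-1}H_{m}(\nu_{1},\ldots,\nu_{m})\,dA\le 0,
\]
contradicting $C_{m}>0$ and $H_{m}\ge 0$.

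The principal obstacle is reconciling the $\mu$-bubble warping terms with the Brendle-Hirsch-Johne algebra: the additional gradient terms arising from the prescribed mean curvature equation must be absorbed into the positive square terms of \cite[Lemma 3.1]{brendlegeroch'sconjecture}, which works only when a Cauchy-Schwarz-type inequality stricter than $\tfrac{2m}{(n-m)(m-1)}\ge 1$ holds. This is precisely why the dimension range must shrink from Theorem \ref{main theorem 1 Introduction} to $3\le n\le 5$ for arbitrary $m$, and to $m\in\{1,n-2,n-1\}$ for $n\in\{6,7\}$. Secondary technical points are the free-boundary regularity of the $\mu$-bubble (which requires the orthogonal meeting with $\partial\widetilde{M\#N}$ and uses the $m$-convex boundary as a barrier) and the persistence of the cup product to the compact slice $\Sigma_{1}$; both are standard but need careful bookkeeping in the infinite cyclic cover.
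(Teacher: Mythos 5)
Your strategy coincides with the paper's: pass to the infinite cyclic cover determined by $\alpha^{1}$, replace the first minimizing slice by a free-boundary $\mu$-bubble, run Lemma \ref{existence of slicing} inside the compact $\mu$-bubble to get the remaining $(m-1)$ slices, and close with the Chen/Brendle--Hirsch--Johne interior estimate plus the boundary identity \eqref{eq1}; your explanation of why the dimension range shrinks is also the right one. However, two steps that you treat as "standard bookkeeping" are where most of the actual work lies, and as written they are gaps.

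First, your warping function $h$ is only specified on the cyclic cover of $M$ via the height function $\tau$, but $N$ is an \emph{arbitrary}, possibly noncompact manifold, and the cover contains a $\bZ$-orbit of copies $N_{k}'$ on which $\tau$ gives no control. To trap the $\mu$-bubble in a compact region you must extend $h$ into each nearby $N_{k}'$ so that it blows up at \emph{finite distance} from the gluing sphere (the paper uses $h=-2L/(\sqrt{a}(\rho-A_{k}-2L/\tan(L^{-1}A_{k})))$), and set $h\equiv\pm\infty$ on the far copies; without this the minimizer could escape into the ends of some $N_{k}'$ and compactness fails. Second, for the free-boundary regularity of the $\mu$-bubble you need the barrier region $\{|h|<\infty\}$ to meet $\partial\widehat{Y}$ with dihedral angle at most $\pi/2$; this does not come for free from a generic level set of $\tau$ and requires a deformation of the defining function near the corner (the paper's Lemma \ref{acute angle}), while preserving the Lipschitz bound that feeds into \eqref{equ about h}. (Also note the $m$-convexity of the boundary plays no role as a regularity barrier; it enters only in the final integral inequality.) A third, smaller point: your appeal to Poincar\'e--Lefschetz duality for $[\Sigma_{1}]$ in the noncompact cover should be replaced by the explicit cap-product computation through the collapse map $G=\pi\circ f$ to $M$ after first compactifying the cover by gluing $\Sigma_{J}$ to $\Sigma_{-J}$, which is how one actually verifies $\beta^{2}|_{\Sigma_{1}}\smile\cdots\smile\beta^{m}|_{\Sigma_{1}}\neq 0$; and the boundary connected sum case needs its own (parallel) treatment.
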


The connect sum $M\# N$ means interior connect sum or boundary connect sum. When it refers to boundary connect sum, the boundary components we choose to glue do not affect the result. Thus we simply assume that connect sum is conducted at two arbitrary boundary components of $M$ and $N$, respectively.

If $N$ is a compact manifold, the result follows from Corollary \ref{cor2}. Thus we will always assume that $N$ is noncompact.

To prove this theorem, we first construct an infinite cyclic cover $\hat{M}$ of $M$ by cutting and pasting. According to Lemma \ref{represented}, let $\Sigma$ be a smooth embedded hypersurface with the boundary $\partial\Sigma\subset 
\partial M$ that represents the Lefschetz dual of $\alpha^{1}$. We can assume $\Sigma$ is connected. Otherwise, let $\Sigma=\cup_{i}\Sigma_{i}$, where $\Sigma_{i}$ are the connected components. Denote the Lefschetz dual of $[\Sigma_{i}]$ by $\gamma^{i}$, then we have $\alpha^{1}=\sum_{i}\gamma^{i}$.  There exists $\gamma_{i}$ such that $\gamma^{i}\smile(\smile_{j=2}^{m}\alpha^{j})\neq 0$, so we replace $\Sigma$ by $\Sigma_{i}$ and still denote it by $\Sigma$. It  follows from Lemma \ref{separate} that $\tilde{M}=M\backslash \Sigma$ is connected manifold with piecewise smooth boundary. 
We can assume that $\Sigma$ transversally intersects $M$ by Lemma \ref{represented}. Let $\tilde{M}_k, k\in \mathbb{Z}$ be infinite copies of $\tilde{M}$. We glue all $\tilde{M}_k$ along the portion of the boundary by gluing the $\Sigma_+$ part of $\tilde{M}_k$ with the $\Sigma_-$ boundary portion of $\tilde{M}_{k+1}$. Denote the resulting manifold by
\[\hat{M}=\cup_{k\in\mathbb{Z}}\tilde{M}_k/{\sim}\]
where the equivalence relation $\sim$ is the gluing. In fact, $\hat{M}$ is an infinite cyclic covering of $M$ which is a noncompact manifold with noncompact boundary.

Let $Y=M\# N$. Let $B_p(R)$ be a ball in $M$ with small radius $R$  such that $B_p(R)\cap \partial M =\emptyset$ and $B_p(R)\cap \Sigma=\emptyset$. Let $B$ be a small ball in $N$ such that $B\cap \partial N=\emptyset$. Denote $M'=M\setminus B_p(R)$ and $N'=N\setminus B$. Then we have that $Y=M'\cup N'$ where $M'$ and $N'$ are glued along the boundary of $B_p(R)$ and $B$ by using an oreintation-reversing diffeomorphism. 

Suppose that $Y$ admits a metric of positive $m$-intermediate curvature and $m$-intermediate boundary curvature, we can assume that the $m$-intermediate curvature satisfies $C_m>1>0$ on $M'$ and the $m$-intermediate boundary curvature satisfies $H_m>b>0$ on $\partial M.$

\subsection{Free boundary $\mu$-bubble}
Let $(M,\partial M,g)$ be an $n$-dimensional manifold with piecewise smooth boundary. Assume that $\partial M=\partial_+M\cup \partial_{-}M\cup \partial_0M$ where $\partial_+ M,\partial_{-}M,$ and $\partial_0M$ are smooth $(n-1)$-dimensional submanifolds (possibly with boundary). Assume that $\partial_+M\cap\partial_-M=\emptyset.$ We also assume that $\partial_+M,\partial_-M$ intersect $\partial_0M$ with non-obtuse angles along the intersection (if they meet). That is, the interior angles between $\partial_+M, \partial_{-}M$ and $\partial_0M$ are less than or equal to $\frac{\pi}{2}$. This will force the minimizer of the following functional $\mathcal{A}(\cdot)$ to stay away from the corner so that the minimizer is smooth. For any open set $\Omega\subset M$, denote the topological boundary $\partial\Omega$ by $\partial\Omega=\overline{\partial \Omega\cap M^\circ}.$

Let $h\in C^{\infty}(M\setminus (\partial_+M\cup \partial_-M))$ such that $h\rightarrow \pm\infty$ on $\partial_{\pm}M$. Let $\Omega_0$ be a reference Caccioppoli set containing $\partial_+M$ such that $\partial\Omega_0\subset M^\circ\cup \partial_0M.$ For example, let $c$ be the regular value of $h$ and denote $\Omega_0=h^{-1}((c,\infty)).$

Consider the functional
\[\mathcal{A}(\Omega)=|\partial\Omega|-\int_M(\chi_\Omega-\chi_{\Omega_0})h\]
for all Caccioppoli set $\Omega\subset M$ with $\Omega\Delta\Omega_0\subset M\setminus(\partial_+M\cup \partial_-M).$

\begin{proposition}[Chodosh-Li \cite{chodoshlisoapbubble}]\label{secondvariationof-A}
    There exists a minimizer $\Omega$ to the functional $\mathcal{A}(\Omega)$ with $\partial\Omega\subset M^\circ\cup \partial_0M.$ The boundary $\partial\Omega$ is smooth and meets $\partial_0M$ orthogonally. The critical equation is
    \[H=h\]
    along $\partial\Omega$ where $\nu$ is the outer unit normal of $\Omega$. On each component, $\Sigma$, of $\partial\Omega$, we have that for any $\varphi\in C^1(\Sigma)$,
    \begin{align*}
    0&\leq \int_{\Sigma} |\nabla_{\Sigma}\varphi|^2-(|A^{\Sigma}|^2+\text{Ric}_{M}(\nu,\nu)+\nabla_\nu h)\varphi^2-\int_{\partial\Sigma}A^{\partial M}(\nu,\nu)\varphi^2\\
    &=\int_{\Sigma} -\big(\Delta_{\Sigma}\varphi+(|A^{\Sigma}|^2+\text{Ric}_{M}(\nu,\nu)+\nabla_\nu h)\varphi\big)\varphi+\int_{\partial\Sigma}\varphi(\frac{\partial \varphi}{\partial \eta}-A^{\partial M}(\nu,\nu)\varphi).
    \end{align*}
\end{proposition}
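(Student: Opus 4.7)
The plan is to apply the direct method of the calculus of variations together with standard regularity theory for sets of finite perimeter, adapted to the free boundary setting, and then compute the first and second variations.

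First, for existence, take a minimizing sequence $\{\Omega_j\}$ of admissible Caccioppoli sets. The functional $\mathcal{A}$ is bounded below on admissible competitors: the term $\int_M (\chi_\Omega - \chi_{\Omega_0}) h$ is finite since $\Omega \Delta \Omega_0 \subset M \setminus (\partial_+M \cup \partial_-M)$ and $h$ is bounded on compact subsets of $M \setminus (\partial_+M \cup \partial_-M)$. The condition $h \to +\infty$ on $\partial_+M$ and $h \to -\infty$ on $\partial_-M$ acts as a barrier forcing the support of $\partial\Omega$ to stay a definite distance from $\partial_\pm M$: any Caccioppoli set whose boundary strays close to $\partial_\pm M$ pays an unbounded $h$-penalty without commensurate perimeter savings. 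Combined with BV-compactness on the compact region where $\partial\Omega$ can live and lower semicontinuity of perimeter, we extract a limit $\Omega$ which realizes the infimum.

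Next, for regularity, in the interior of $M$ the minimizer is a set of finite perimeter minimizing a prescribed-mean-curvature-type functional; standard regularity theory (De Giorgi, Federer, Simons) yields smoothness of $\partial\Omega$ in $M^\circ$ when $n \leq 7$. Near $\partial_0M$, one uses capillarity/partitioning regularity (Gr\"uter, De Philippis–Maggi) to show that $\partial\Omega$ is smooth up to the boundary and meets $\partial_0M$ orthogonally. Crucially, the non-obtuse angle assumption between $\partial_0M$ and $\partial_\pm M$ prevents $\partial\Omega$ from accumulating at the corner: near such a corner a small area-decreasing competitor can be constructed by pushing $\partial\Omega$ away, and the non-obtuseness ensures the competitor remains admissible without increasing perimeter, excluding corner singularities.

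Once smoothness is in hand, a one-parameter family $\Omega_t$ with variation vector field $X$ (tangent to $\partial_0M$ along $\partial\Sigma$) gives the first variation
\[
\left.\frac{d}{dt}\right|_{t=0} \mathcal{A}(\Omega_t) = \int_{\Sigma} (H - h)\langle X, \nu\rangle,
\]
so $H = h$ on $\partial\Omega$. For the second variation, take $X = \varphi\nu$ with $\varphi\in C^1(\Sigma)$ properly extended so that the flow preserves tangency to $\partial_0M$ along $\partial\Sigma$; this forces a first-order correction that produces the boundary term. The interior second variation of the perimeter contributes $\int_\Sigma(|\nabla_\Sigma\varphi|^2 - (|A^\Sigma|^2 + \Ric_M(\nu,\nu))\varphi^2)$, the variation of the $h$-term contributes $-\int_\Sigma \nabla_\nu h\, \varphi^2$, and the free boundary correction, using the orthogonality $\langle\nu,\eta\rangle=0$ along $\partial\Sigma$ and differentiating in $t$ the constraint $\langle\nu_t,\eta_t\rangle=0$, contributes the boundary term $-\int_{\partial\Sigma} A^{\partial M}(\nu,\nu)\varphi^2$. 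Integration by parts gives the self-adjoint form in the statement.

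The main obstacles are the boundary and corner regularity, which are the technical heart of the argument: establishing smoothness and orthogonal contact at $\partial_0M$, and ruling out singularities at the corners $\partial_\pm M \cap \partial_0M$ (using non-obtuseness). The existence step and the variational computations themselves are essentially standard once one has set up the correct admissible class and ensured that the minimizer lives in the smooth part of $M$.
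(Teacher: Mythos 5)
The paper offers no proof of this proposition---it is imported verbatim from Chodosh--Li---so there is no internal argument to compare with; your sketch follows the standard route (direct method with the divergence of $h$ at $\partial_{\pm}M$ serving as a barrier, interior and free-boundary regularity for $n\leq 7$, corner avoidance via the non-obtuse angle hypothesis, then the first and second variation computations) and is correct in outline. The one imprecision is in the lower-boundedness step: $\Omega\Delta\Omega_0$ need not lie in a \emph{fixed} compact subset of $M\setminus(\partial_+M\cup\partial_-M)$ uniformly over competitors, so rather than asserting finiteness of $\int_M(\chi_\Omega-\chi_{\Omega_0})h$ one should observe that the sign conventions ($\Omega_0\supset\partial_+M$ with $h>0$ near $\partial_+M$ and $h<0$ near $\partial_-M$) force the term $-\int_M(\chi_\Omega-\chi_{\Omega_0})h$ to be bounded below, which is how the cited argument actually runs.
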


Note that the boundary integral term disappears in the above inequality provided the boundary component $\Sigma$ is closed. 
\vskip.2cm
\subsection{Proof of Theorem \ref{main theorem 2}}
We first describe an infinite cyclic covering $\hat{Y}$ of $Y.$ 
Let $\tilde{M}^{'}_k$, $k\in\mathbb{Z}$ be infinite copies of $\tilde{M}^{'}$, we then glue $\tilde{M}_k^{'}$ along some portion of the boundary by gluing the $\Sigma_+$ part of $M_{k}^{'}$ with the $\Sigma_-$ boundary portion of $M^{'}_{k+1}.$ Denote
\[\hat{M}^{'}=\cup_{k\in\mathbb{Z}}\tilde{M}^{'}_k/{\sim},\]
where $\sim$ is the equivalence relation corresponding to the gluing we defined above.

Then we have
\[\hat{Y}=\hat{M}\#_\mathbb{Z} N=\hat{M}^{'}\cup (\cup_{k\in\mathbb{Z}}N^{'}_k)\]
where each $N^{'}_k$ are attached to $\hat{M}^{'}$ by gluing on the boundary spheres. Notice that there are infinitely many copies of $\Sigma$ in $\hat{Y}$, we denote the $\Sigma_+$ part of $\tilde{M}_0^{'}$ by $\Sigma_0.$ By our assumption, we can assume that the $m$-intermediate curvature satisfies $(C_{m})_{\hat{Y}}>1$ on $\hat{M}^{'}$ and the $m$-intermediate boundary curvature satisfies $H_{m}>b>0$ on $\partial \hat{M}.$ 

Fix $a>0$. Next, we can construct a weight function $h$ corresponding to $a$ satisfying 
\begin{equation}\label{equ about h}
(C_{m})_{\hat{Y}}+a h^2-2|D_{\hat{Y}}h|>0.
\end{equation}
For convenience of readers, we write the detail. In particular, we should consider how to construct $h$ near the boundary of $\hat{Y}$ such that the angle between $\partial(\{|h|<\infty\})\setminus \partial\hat{Y}$ and $\partial \hat{Y}$ is less than or equal to $\frac{\pi}{2}$.

\begin{lemma}\label{acute angle}
    Suppose $(M,\partial M)$ is a smooth Riemmanian manifold with non-empty boundary. Let $\rho:M\rightarrow \mathbb{R}$ be a smooth function with $\text{Lip}(\rho)\leq D$. For any $a>0$ and $\epsilon>0$, if $\partial(\{\rho(x)< a\})$ is a compact set, then there exist a small constant $\epsilon'>0$ and a smooth function $\tilde{\rho}$ on $M$ such that  the dihedral angle between $  \partial(\{\tilde{\rho}<a\})\setminus \partial M$ and $ \partial M\cap \partial(\{\tilde{\rho}<a\})$ is less than or equal to $\frac{\pi}{2}$, $\text{Lip}(\tilde{\rho})\leq D+\epsilon'$ and $\tilde{\rho}(x)=\rho(x)$ for $x\in \{\rho(x)\leq a-\epsilon\}$. 
\end{lemma}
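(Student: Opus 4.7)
The plan is to modify $\rho$ locally in a thin collar of $\partial M$ by precomposing with a diffeomorphism in the normal direction that has vanishing derivative at $\partial M$. Using Fermi coordinates $(y,t)\in \partial M\times[0,\delta)$ in which $g=dt^2+g_t$, I would pick a smooth reparametrization $\zeta\colon[0,\infty)\to[0,\infty)$ satisfying $\zeta(0)=0$, $\zeta'(0)=0$, $0\le\zeta'\le1$, and $\zeta(t)=t$ for $t\ge t_0$, where $t_0$ is a small parameter to be chosen. Next, I would select a smooth cutoff $\chi\colon\partial M\to[0,1]$ with $\chi=1$ on $\{y:\rho(y,0)\ge a-\epsilon/4\}$ and $\chi=0$ on $\{y:\rho(y,0)\le a-\epsilon/2\}$, and set $\tilde\zeta(y,t):=t+\chi(y)(\zeta(t)-t)$ inside the collar. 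Finally, define $\tilde\rho(y,t):=\rho(y,\tilde\zeta(y,t))$ in the collar and $\tilde\rho:=\rho$ outside; compactness of $\partial\{\rho<a\}$ ensures that all estimates remain uniform in the relevant region.

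For the verification, note that $\tilde\rho|_{\partial M}=\rho|_{\partial M}$, so the corner set $\{\tilde\rho=a\}\cap\partial M$ equals $\{\rho=a\}\cap\partial M$, and at such points $\chi=1$ so $\partial_t\tilde\zeta(y,0)=\zeta'(0)=0$. This yields $\partial_\nu\tilde\rho=-\partial_t\tilde\rho(y,0)=0$, hence the dihedral angle is exactly $\pi/2$. For the preservation property, if $\chi(y)=0$ then $\tilde\zeta(y,t)=t$ and $\tilde\rho=\rho$ trivially; if $\chi(y)>0$ then $\rho(y,0)\ge a-\epsilon/2$, so whenever $\rho(y,t)\le a-\epsilon$ the Lipschitz bound forces $t\ge\epsilon/(2D)$, and choosing $t_0<\epsilon/(2D)$ makes $\zeta(t)=t$ on this range, giving $\tilde\rho=\rho$ as required.

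The most delicate part is the Lipschitz estimate. Using the Fermi form of $g$, I compute
\[
|\nabla\tilde\rho|_g^2=(\partial_t\tilde\zeta)^2(\partial_t\rho)^2+\bigl|\nabla_y^{g_t}\rho(y,\tilde\zeta)+\partial_t\rho(y,\tilde\zeta)\,(\zeta(t)-t)\,\nabla^{g_t}\chi\bigr|_{g_t}^2.
\]
Since $\partial_t\tilde\zeta=1+\chi(\zeta'-1)\in[0,1]$, the first term is bounded by $(\partial_t\rho)^2$, and the correction inside the second term is $O(t_0)$ because $|\zeta(t)-t|\le t_0$ and $|\nabla\chi|$ is bounded (on the compact boundary region where $\chi$ varies). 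The discrepancy between evaluating at $g_t$ versus $g_{\tilde\zeta(y,t)}$ is also $O(t_0)$ by smoothness of $g_t$ in $t$. Collecting these errors,
\[
|\nabla\tilde\rho|_g^2\le|\nabla\rho|_g^2+Ct_0\le D^2+Ct_0,
\]
so $|\nabla\tilde\rho|_g\le D+\epsilon'$ once $t_0$ is small enough. The main obstacle is the careful bookkeeping of these $O(t_0)$ errors in a non-product Fermi metric; the compactness hypothesis on $\partial\{\rho<a\}$ is what lets all constants stay uniform so that a single choice of $t_0$ achieves all three conclusions simultaneously.
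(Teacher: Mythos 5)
Your construction is correct in spirit but takes a genuinely different route from the paper. The paper works at the level of the hypersurface: it replaces the level set $\rho^{-1}(a)$ near the corner by a graph $G_f$ over it, obtained by flowing along a vector field tangent to $\partial M$ and choosing the graph function with large normal derivative along $\Sigma_a$ so that the new hypersurface meets $\partial M$ at an acute angle; $\tilde\rho$ is then rebuilt by declaring $G_f$ to be the new $a$-level set and interpolating down to $\rho^{-1}(a-\epsilon)$, with the Lipschitz bound coming from the distance $\geq(\epsilon-\epsilon_1)/D$ between $G_f$ and $\rho^{-1}(a-\epsilon)$. You instead work at the level of the function, precomposing $\rho$ with a reparametrization of the Fermi normal coordinate whose derivative vanishes at $\partial M$, which forces $\partial_\nu\tilde\rho=0$ at every corner point and hence an exactly orthogonal intersection. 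Your version has the advantage that $\tilde\rho$ is manifestly smooth (a composition of smooth maps, rather than an unspecified interpolation) and the Lipschitz bookkeeping is explicit; the paper's version produces a strictly acute angle, which is not needed for the statement. Both arguments implicitly assume $a$ is a regular value of $\rho$ and of $\rho|_{\partial M}$ near the corner, which is harmless.

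Two points need repair. First, your conditions on $\zeta$ are mutually inconsistent: $\zeta(0)=0$, $\zeta'(0)=0$, $0\le\zeta'\le1$ and $\zeta(t_0)=t_0$ cannot hold simultaneously, since then $\zeta(t_0)=\int_0^{t_0}\zeta'<t_0$. You must allow $\zeta'\le 1+\eta$ for a small $\eta>0$ so that $\zeta$ can catch up to the identity; this only changes the first term of your gradient estimate to $(1+\eta)^2(\partial_t\rho)^2$ and the final bound to $D^2+C(t_0+\eta)$, which is still absorbed into $\epsilon'$. Second, as stated your cutoff $\chi$ equals $1$ on all of $\{y:\rho(y,0)\ge a-\epsilon/4\}$, which in the intended application (non-compact $\partial\hat Y$) need not be compact, so the uniformity of the collar width, of $|\nabla\chi|$, and of the $O(t_0)$ metric-comparison constants is not automatic. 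This is fixed by additionally truncating $\chi$ to be supported in a compact neighborhood of the corner set $\{y\in\partial M:\rho(y,0)=a\}$ (compact by hypothesis); since every corner point of $\{\tilde\rho<a\}$ lies in this set and $\tilde\rho=\rho$ on $\partial M$, the orthogonality conclusion is unaffected. With these two adjustments the argument is complete.
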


\begin{proof}
    We only consider for $\rho^{-1}(a)$ and $\rho^{-1}(a-\epsilon)$ are smooth hypersurfaces which transversally intersect $\partial M$. We call the angle formed by $\partial(\{\rho(x)<a\})\cap\partial M$ and $\partial(\{\rho(x)<a\})\setminus \partial M$ the dihedral angle. If the dihedral angle is no greater than $\frac{\pi}{2}$, then the proof is done. Otherwise, we shall construct a new smooth hypersurface with the desired angle condition to replace $\rho^{-1}(a)$. The construction is inspired by \cite{chen2024positivescalarcurvaturemetrics}. Notice that $\rho^{-1}(a)$ is a compact hypersurface. Thus, there exists a tubular neighborhood  $B_{\epsilon_{1}}(\rho^{-1}(a))$ for $\epsilon_{1}\ll\epsilon$. Since $\text{Lip}(\rho)\leq D$, we have $B_{\epsilon_{1}}(\rho^{-1}(a))\cap \rho^{-1}(a-\epsilon)=\emptyset$. Let $0\leq f\leq 1$ be a smooth function on $\rho^{-1}(a)$ and $f=0$ on $\Sigma_{a}:=\rho^{-1}(a)\cap \partial M$. Consider a local smooth vector field $X$ such that $X$ is tangential to $\partial M$ and perpendicular to $\Sigma_{a}$, transverse to $\rho^{-1}(a)$ and inward-pointing. Then we can construct a graph hypersurface $G_{f}:=\{\phi_{\epsilon_{1}f(q)}(q)|q\in \rho^{-1}(a)\}\subset B_{\epsilon_{1}}(\rho^{-1}(a))$, where $\phi_{t}$ is the flow generated by $X$. 
    Next, we compute the interior angle formed by $G_{f}$ and $\partial M$. Let $(x,s)$ be the coordinate of $\rho^{-1}(a)$ near $\Sigma_{a}$, where $x$ is the coordinate in $\Sigma_{a}$ and $\partial s$ is the inner normal direction of $\Sigma_{a}\hookrightarrow \rho^{-1}(a)$. Then we extend the coordinate to $M$ locally by using $\Phi: \Sigma_{a}\times [0,\epsilon_{2})\times [0,\epsilon_{1})\rightarrow M$ satisfying $\Phi(x,s,t)=\phi_{t}((x,s))$. By the construction, $G_{f}$ have the representation $(x,s,\epsilon_{1}f(x,s))$ and the angle between $G_{f}$ and $\partial M$ is acute if  $$\left.\langle \frac{\partial }{\partial s}+\epsilon_{1} \frac{\partial f}{\partial s} \frac{\partial}{\partial t},\frac{\partial}{\partial t}\rangle\right|_{s=0,t=0}>0.$$
    Notice that $\Sigma_{a}$ is compact. Thus, we can choose $f$ such that $\frac{\partial f}{\partial s}|_{s=0}$ is sufficiently large. Then the angle is less than $\frac{\pi}{2}$.
   Notice that $G_{f}\cap \rho^{-1}(a-\epsilon)=\emptyset$ and $\text{dist}(G_{f},\rho^{-1}(a-\epsilon))\geq \frac{\epsilon-\epsilon_{1}}{D}$. Finally, we define $\tilde{\rho}(x)=a$ for $x\in G_{f}$, $\tilde{\rho}(x)=\rho(x)$ for $x\in \{\rho(x)\leq a-\epsilon\}$ and interpolate between $\rho^{-1}(a-\epsilon)$ and $G_{f}$. Moreover, let $\tilde{\rho}(x)\equiv a$ for other $x\in M.$ Thus we obtain a smooth function $\tilde{\rho}$ such that $\text{Lip}(\tilde{\rho})\leq \frac{\epsilon}{\epsilon-\epsilon_{1}}D$. By choosing $\epsilon_1$ small, we have $ \text{Lip}(\tilde{\rho})\leq D+\epsilon'$ for some $\epsilon'>0$.
\end{proof}
\begin{remark}\label{acute angle 2}
    Under the same assumptions of Lemma \ref{acute angle} except that $\partial(\{\rho(x)>-a\})$ is a compact set, there exist a smooth function $\tilde{\rho}$ and $\epsilon'>0$ such that  the angle between $\partial (\{\tilde{\rho}(x)> -a\})\setminus \partial M$ and $\partial (\{\tilde{\rho}(x)> -a\})\cap \partial M$ is less than or equal to $\frac{\pi}{2}$, $\text{Lip}(\tilde{\rho})\leq D+\epsilon'$ and $\tilde{\rho}(x)=\rho(x)$ for $x\in \{\rho(x)\geq -a+\epsilon\}$. 
\end{remark}

Denote $\rho_0=\operatorname{dist}(x,\Sigma_0)$ the signed distance function to $\Sigma_0$ on $\hat{M}$. Let $B_\epsilon(p_k)$ be small balls on $\hat{M}$ along whose boundaries we glue $N_k$ where $p_k\in \tilde{M}_k$. We modify $\rho_0$ by interpolating $\rho_0$ and the constant $A_k$ such that $\rho_0\equiv A_k$ in $B_{\epsilon}(p_k)$ where $A_k=\operatorname{dist}(p_k,\Sigma_0)$. Notice that $A_k> 0$ if $k\geq0$ and $A_k< 0$ if $k<0$. On each $N_k^{'}$ define
\[\rho_0(x)=\begin{cases}
    A_k+\operatorname{dist}(x,\partial N_k^{'}\backslash \partial N_{k})\ k\geq 0;\\
    A_k-\operatorname{dist}(x,\partial N_k^{'}\backslash \partial N_{k}) \ k<0.
\end{cases}\]
Thus, we obtain a function $\rho_0$ defined on $\hat{Y}$ that is Lipschitz.
We define $\rho_1$ to be a smoothing of $\rho_0$, and we can assume that $\rho_1\equiv A_k$ in a small neighborhood of $\partial N_k^{'}\backslash \partial N_{k}$. Notice that there exists a constant $L'>0$ such that
\[|\operatorname{Lip}(\rho_1)|_g\leq \frac{\sqrt{a}}{2}L'.\]
Let $L>L'$ be a constant. Denote
\[\Omega=\{x\in \hat{M}^{'}:|\rho_1|\leq \frac{\pi L}{2}\};\]
\[\Omega_k=\{x\in N_k^{'}: |\rho_1(x)|\leq A_{k}+\frac{2L}{\tan(L^{-1}A_{k})}\}, \ \forall \ k\in \mathbb{N};\]
Denote $I=\max\{k\in\mathbb{N}: |A_k|<\frac{\pi L}{2}\}$. Let $\Pi =\Omega\cup (\cup_{|k|\leq I}\Omega_k)$ be a compact domain of $\hat{Y}$ whose boundary can be split to $\partial\Pi\cap\partial \hat{M}$ and $\partial\Pi\setminus\partial \hat{M}$. By modifying $\rho_1$ slightly we can assume that $\partial\Pi\cap\partial \hat{M}$ and $\partial\Pi\setminus\partial \hat{M}$ intersects transversally. However the dihedral angle formed by them may not be less than or equal to $\pi/2$. We may apply Lemma \ref{acute angle} to obtain a new smooth function $\rho$ on $\hat{Y}$ such that $\partial(\{x\in \hat{M}^{'}:|\rho(x)|<\frac{\pi L}{2}\})\setminus \partial \hat{M}^{'}$ and $\partial(\{x\in N_k^{'}: |\rho(x)|<A_{k}+\frac{2L}{\tan(L^{-1}A_{k})}\})\setminus \partial N_k^{'}$ intersect $\partial \hat{Y}$ with the dihedral angle less than or equal to $\frac{\pi}{2}$. Moreover, $\operatorname{Lip}(\rho)\leq \frac{\sqrt{a}}{2}L $ if we choose $\epsilon<\frac{\sqrt{a}}{2}(L-L')$ in Lemma \ref{acute angle}. Notice that $\Pi\setminus \partial \hat{Y}$ has a finite number of disjoint components. By Remark \ref{acute angle 2}, we can apply  Lemma \ref{acute angle} finitely many times to obtain the desired smooth function $\rho.$


On $\{|\rho|<\frac{\pi L}{2}\}\cap \hat{M}'$, we define 
\[
    h(p) = \frac{1}{\sqrt{a}}\tan(\frac{1}{L}\rho(p)).
\] 
On the rest of $\hat{M}'$, we extend $h$ by taking $ h=\pm \infty$ such that it is continuous. 
If $A_{k}\geq \frac{\pi L}{2}$, set $h=+\infty$ on $ N_{k}'$. If $A_{k}\leq-\frac{\pi L}{2}$, set $h=-\infty$ on $N_{k}'$. For $|A_{k}|< \frac{\pi L}{2}$, 
we define 
\[
h(x) = -\frac{2L}{\sqrt{a}(\rho(x)-A_{k}-\frac{2L}{\tan(L^{-1}A_{k})})}.
\]
on $N_{k}'\cap \{\rho<A_{k}+\frac{2L}{\tan(L^{-1}A_{k})}\}$ as $k\geq 0$ or on $N_{k}'\cap \{\rho>A_{k}+\frac{2L}{\tan(L^{-1}A_{k})}\}$ as $k< 0$.
Similarly, we extend to the rest of $N_{k}'$ by setting $h=\pm \infty$ and smoothen $h$ near the glue part.  In all, we have constructed the smooth function $h$ satisfying \eqref{equ about h}.

Now we can construct a free boundary $\mu$-bubble $\Lambda_{1}$ with respect to $h$. Using proposition \ref{secondvariationof-A}, there exist $(\Lambda_{1},\partial \Lambda_{1})\subset (\hat{Y},\partial\hat{Y})$ which is a free boundary $\mu$-bubble. Next, we want to construct a compact manifold which contains $\Lambda_{1}$. We can find a compact region $Y'\subset \hat{Y}$ with smooth boundary so that $\partial Y' \cap \hat{M} = \Sigma_{J} \cup \Sigma_{-J}$ for some large $J \in \mathbb{N}$. Moreover, $\Lambda_{1}\subset Y'$ and other boundary components of $Y'$ are compact and contained in some $N_k'$. By gluing the hypersurfaces $\Sigma_{J}$ and $\Sigma_{-J}$ to each other, we thus
obtain a manifold $\tilde{Y}$ with boundary diffeomorphic to $\bar{M} \#_{i\in I} \bar{X}_{i}$, where $\bar{M}$ is a $2J$-cyclic covering of $M$
obtained by cutting and pasting along $\Sigma$, $\#$ denotes the interior connected sum and each $\bar{X}_{i}$ is a compact manifold with boundary. This manifold $\tilde{Y}$ contains $\Lambda_1$ as a hypersurface.

On $\Lambda_{1}$, we have $H_{\Lambda_{1}}=h$. For any $\varphi\in C^\infty(\Lambda_{1})$,
    \begin{align*}
    0&\leq \int_{\Lambda_{1}} |\nabla_{\Lambda_{1}}\varphi|^2-(|A^{\Lambda_{1}}|^2+\text{Ric}_{\tilde{Y}}(\nu,\nu)+\nabla_\nu h)\varphi^2-\int_{\partial\Lambda_{1}}A^{\partial \tilde{Y}}(\nu,\nu)\varphi^2\\
    &=\int_{\Lambda_{1}} -\big(\Delta_{\Lambda_{1}}\varphi+(|A^{\Lambda_{1}}|^2+\text{Ric}_{\tilde{Y}}(\nu,\nu)+\nabla_\nu h)\varphi\big)\varphi+\int_{\partial\Lambda_{1}}\varphi(\frac{\partial \varphi}{\partial \eta}-A^{\partial \tilde{Y}}(\nu,\nu)\varphi).
    \end{align*}
    
    \textbf{Case} $m=1$. By plugging $\varphi=1$, we have
    \[
    \begin{aligned}
       \int_{\Lambda_{1}}(|A^{\Lambda_{1}}|^2+\text{Ric}_{\tilde{Y}}(\nu,\nu)+\nabla_\nu h)+\int_{\partial\Lambda_{1}}A^{\partial \tilde{Y}}(\nu,\nu) \leq 0.
    \end{aligned}
    \]
    Taking $a=\frac{1}{n-1}$, so $h$ satisfies satisfies
    \[
    (C_{m})_{\hat{Y}}+\frac{1}{n-1} h^2-2|D_{\hat{Y}}h|>0.
    \]
    Using Cauchy-Schwarz inequality, we have 
    \[
    \begin{aligned}
        |A^{\Lambda_{1}}|^2+\text{Ric}_{\tilde{Y}}(\nu,\nu)+\nabla_\nu h\geq (C_{1})_{\hat{Y}}+\frac{1}{n-1} h^2-|D_{\hat{Y}}h|.
    \end{aligned}
    \]
    By the condition $H_{1}\geq 0$ on $\partial \hat{Y}$ and $\partial \Lambda_{1}\subset \partial \hat{Y}$, we obtain
    \[
    \begin{aligned}
       0<\int_{\Lambda_{1}}(|A^{\Lambda_{1}}|^2+\text{Ric}_{\tilde{Y}}(\nu,\nu)+\nabla_\nu h)+\int_{\partial\Lambda_{1}}A^{\partial \tilde{Y}}(\nu,\nu) \leq 0
    \end{aligned}
    \]
    which is a contradiction.
    
    \textbf{Case} $m\geq 2$. We can consider the eigenfunction $\rho_{1}\in C^{\infty} (\Lambda_{1})$ which satisfies the following equation.
\[
\left\{
\begin{array}{cc}
     -\Delta_{\Lambda_{1}}\rho_{1}-(|A^{\Lambda_{1}}|^2+\text{Ric}(\nu,\nu)+\nabla_{\nu}h)\rho_{1} = \lambda \rho_{1} & \text{in}\ \Lambda_{1}; \\
     \frac{\partial \rho_{1}}{\partial \eta} = \rho_{1}A^{\partial \tilde{Y}}(\nu,\nu)& \text{on}\ \partial \Lambda_{1} .
\end{array}\right.
\]
Now we want to show $\iota:\Lambda_{1}\rightarrow \tilde{Y}$ satisfying the assumptions of Lemma \ref{existence of slicing}. Then we can obtain a $(m-1)$-weighted slicing of $(\Lambda_{1},\rho_{1})$. Consider the map $G=\pi \circ f: \tilde{Y}\rightarrow M$, where $f:\tilde{Y}\rightarrow \bar{M}$ collapse $N_{k}'$ to one point and $\pi:\bar{M}\rightarrow M$ is the covering map.  Then we are able to show $(\iota\circ G)^{*}\alpha_{2}\smile (\iota\circ G)^{*}\alpha_{3}\smile \cdots \smile (\iota\circ G)^{*}\alpha_{m}\neq 0\in H^{m-1}(\Lambda_{1},\mathbb{Z})$. 
In fact,
\[
\begin{aligned}
    & G_{*}([\Lambda_{1}] \frown (G^{*}\alpha_{2}\smile \cdots \smile G^{*}\alpha_{m}))\\
    =& G_{*}[\Lambda_{1}]\frown (\alpha_{2}\smile \cdots \smile \alpha_{m})\\
    =& [\Sigma]\frown (\alpha_{2}\smile \cdots \smile \alpha_{m})\\
    =& ([M]\frown \alpha_1) \frown(\alpha_{2}\smile \cdots \smile \alpha_{m})\\
    =&[M]\frown (\alpha_{1}\smile \cdots \smile \alpha_{m})\\
    \neq & 0.
\end{aligned}
\]
Since $G_{*}: H_{*}(\tilde{Y},\partial \tilde{Y})\rightarrow H_{*}(M,\partial M)$ is a group homomorphism. Then, $[\Lambda_{1}] \frown (G^{*}\alpha_{2}\smile \cdots \smile G^{*}\alpha_{m})\neq 0\in H_{n-m}(\tilde{Y},\partial \tilde{Y})$. Thus, $[\Lambda_{1}] \frown (\iota^{*} G^{*}\alpha_{2}\smile \cdots \smile \iota^{*}G^{*}\alpha_{m})\neq 0\in H_{n-m}(\Lambda_{1},\partial \Lambda_{1})$ and then $(\iota\circ G)^{*}\alpha_{2}\smile (\iota\circ G)^{*}\alpha_{3}\smile \cdots \smile (\iota\circ G)^{*}\alpha_{m}\neq 0\in H^{m-1}(\Lambda_{1},\mathbb{Z})$. Using Lemma \ref{existence of slicing} and Remark \ref{without boundary condition}, there exists a weighted slicing $\Sigma_{m}\subset \Sigma_{m-1}\subset \cdots \subset \Sigma_{2}\subset \Lambda_{1}\subset \tilde{Y}$.

Similar to \eqref{bottom term}, we have
\[
\begin{aligned}
        &\int_{\Sigma_{m}}\rho_{m-1}^{-1}\big(\Delta_{\Sigma_{m-1}}\log \rho_{m-1} +(\nabla_{\Sigma_{m-1}}^{2}\log \rho_{m-1})(\nu_{m},\nu_{m})-|A^{\Sigma_{m}}|^2\\
        &-\text{Ric}_{\Sigma_{m-1}}(\nu_{m},\nu_{m})\big
        )\ d\mu -\int_{\partial \Sigma_{m}}\rho_{m-1}^{-1}(\frac{\partial \log \rho_{m-1}}{\partial \eta}+A^{\partial \Sigma_{m-1}}(\nu_{m},\nu_{m}))\geq 0.
    \end{aligned}
\]
The integration over $\Sigma_{m}$ was estimated in \cite{chenshuli_end}. That is, when $m^2-mn+2n-2>0$ and $m^2-mn+m+n>0$ and $2\leq m \leq n-1$ and $a$ small enough, we can obtain
    \[
    \begin{aligned}
        &\int_{\Sigma_{m}}\rho_{m-1}^{-1}\big(\Delta_{\Sigma_{m-1}}\log \rho_{m-1} +(\nabla_{\Sigma_{m-1}}^{2}\log \rho_{m-1})(\nu_{m},\nu_{m})-|A^{\Sigma_{m}}|^2\\
        &-\text{Ric}_{\Sigma_{m-1}}(\nu_{m},\nu_{m})\big
        )\ d\mu \leq \int_{\Sigma_{m}}-\rho_{m-1}^{-1}\big(C_{m}(\nu_{1},...,\nu_{m})+aH_{\Lambda_{1}}^2-|\langle \nabla_{\tilde{Y}}h,\nu_{1}\rangle |\big)\ d\mu.
    \end{aligned}
    \]
    The integration over $\partial \Sigma_{m}$ is similar to \eqref{eq1}, we have
    \[
    \frac{\partial \log \rho_{m-1}}{\partial \eta}+A^{\partial \Sigma_{m-1}}(\nu_{m},\nu_{m})
        =H_{m}(\nu_{1},...,\nu_{m}).
    \]
    Thus, by inequality \eqref{equ about h} we have
    \begin{equation}\label{inequa bubble}
    \int_{\Sigma_{m}}\rho_{m-1}^{-1}\big(C_{m}(\nu_{1},...,\nu_{m})+aH_{\Lambda_{1}}^2-|\langle \nabla_{\tilde{Y}}h,\nu_{1}\rangle |\big)\ d\mu + \int_{\partial \Sigma_{m}}\rho_{m-1}^{-1}H_{m}(\nu_{1},...,\nu_{m})\leq 0.
    \end{equation}
    This is a contradiction to \eqref{equ about h} and the assumption that $H_m\geq 0$.\\
\vskip.2cm
\subsection{Connected sum in the boundary}

In this subsection, we deal with the case of connected sum in the boundary. Most part of the proof is same as above. We also construct hypersurface $\Sigma$ and an infinite cyclic cover $\hat{M}$ as in the above case. 
Let $Y=M\#_{b} N$. Let $B_p(R)$ be a ball in $\partial M$ with small radius $R$ such that $B_p(R)\cap \Sigma=\emptyset$ and let $B$ be a small ball in $\partial N$. Then we have that $Y=M\cup N/{\sim} $, where $M$ and $N$ are glued along $B_p(R)$ and $B$ by using an oreintation-reversing diffeomorphism. Next, we denote $\hat{Y}=\hat{M}\cup (\cup_{k\in \mathbb{Z}}N_{k})/{\sim}$ as an infinite cyclic covering $\hat{Y}$ of $Y$, where $N_{k}$ are copies of $N$. By our assumption, we can assume that the $m$-intermediate curvature satisfies $(C_{m})_{\hat{Y}}>1$ on $\hat{M}$ and the $m$-intermediate boundary curvature satisfies $H_m>b>0$ on $\partial \hat{M}.$

Denote $\rho_0=\operatorname{dist}(x,\Sigma_0)$ the signed distance function to $\Sigma_0$ on $\hat{M}$. Let $B_\epsilon(p_k)$ be small balls on $\partial \hat{M}$ along it we glue $N_k$ where $p_k\in \tilde{M}_k$. We modify $\rho_0$ by interpolating $\rho_0$ and the constant $A_k$ such that $\rho_0\equiv A_k$ in $B_{\epsilon}(p_k)$ where $A_k=\operatorname{dist}(p_k,\Sigma_0)$. Notice that $A_k> 0$ if $k\geq0$ and $A_k< 0$ if $k<0$. On each $N_k$ define
\[\rho_0(x)=\begin{cases}
    A_k+\operatorname{dist}(x,B_{k})\ k\geq 0;\\
    A_k-\operatorname{dist}(x,B_{k}) \ k<0,
\end{cases}\]
where each $B_{k}$ in $N_{k}$ is a copy of the ball $B$ in $N$. 

Fix $a>0$, using same argument as connected sum in the interior, we can construct a weight function $h$ corresponding to $a$ satisfying 
\begin{equation}\label{equ about h boundary}
(C_{m})_{\hat{Y}}+a h^2-2|D_{\hat{Y}}h|>0.
\end{equation}
and a free boundary $\mu$-bubble $\Lambda_{1}\subset \hat{Y}$ with respect to $h$. Then we find a compact manifold $Y'\subset \hat{Y}$ satisfying $\partial Y' \cap \hat{M} = \Sigma_{J}\cup \Sigma_{-J}$ for some large $J$ and $\Lambda_{1}\subset Y'$. After gluing $\Sigma_{J}$ and $\Sigma_{-J}$, we get a manifold $\tilde{Y}$ with boundary diffeomorphic to $\tilde{M} \#_{i\in I} \tilde{X}_{i}$, where $\tilde{M}$ is a $2J$-cyclic covering of M
obtained by cutting and pasting along $\Sigma$. This manifold $\tilde{Y}$ contains $\Lambda_1$ as a hypersurface. The rest of the proof follows from the same estimates in the case of interior connected sum.

\bibliographystyle{alpha}

\bibliography{reference}
\end{document}